 \newtheorem{proposition}{Proposition}[section]
\newtheorem{theorem}{Theorem}[section]
\newtheorem{lemma}{Lemma}[section]
\newtheorem{corollary}{Corollary}[section]
\newtheorem{example}{Example}[section]
\newtheorem{examples}[example]{Examples}
\theoremstyle{definition}
\newtheorem{definition}{Definition}[section]
\newtheorem{remark}{Remark}[section]
\begin{document}

\title[Epireflections for models of sketches]{Very-well-behaved epireflections for categories of models of sketches}

 \author[Jo\~{a}o J. Xarez]{Jo\~{a}o J. Xarez}



 \address{CIDMA - Center for Research and Development in Mathematics and Applications,
Department of Mathematics, University of Aveiro, 3810-193 Aveiro, Portugal}

 \email{xarez@ua.pt}

 \subjclass[2020]{18A40,18A32,18E50,18N50,18F20,18C30,18N10}

 \keywords{Epireflection, stable units, prefactorization, monotone-light factorization, presheaves, simplicial set, Kan extensions, pseudo-filtered, cogenerating set, models of sketches, $n$-categories}

 \dedicatory{This article is dedicated to Robert Par\'{e} for his 80th birthday}

\begin{abstract}
Firstly, precise conditions on how to obtain \emph{very-well-behaved} epireflections are explored and improved from the author's previous papers; meaning that, beginning with a monad and a prefactorization system on a category, is produced a reflection with stable units (stronger than semi-left-exactness, also called admissibility in categorical Galois Theory) and an associated monotone-light factorization. Then, we were able to show that, for a pseudo-filtered category $\mathbb{J}$ in which every arrow is a monomorphism, the colimit functor on $Set^\mathbb{J}$ produces a \emph{very-well-behaved} epireflection; if $\mathbb{J}=\mathbf{2}$ the monotone-light factorization is non-trivial, as showed as an example. Then, new results are presented that grant \emph{very-well-behaved} subreflections from the \emph{\emph{very-well-behaved}} reflections induced by an adjunction given by right Kan extensions for presheaves. These subreflections are obtained by restricting to the models of a sketch; it is showed finally that the known \emph{very-well-behaved} reflection of n-categories into n-preorders is an example of this process (being n any positive integer).
\end{abstract}

\maketitle

\section{Introduction}\label{sec-introduction}

Let $(\mathcal{F},\theta)$ be a pointed endofunctor on a category $\mathbb{C}$ with pullbacks, i.e., $\mathcal{F}:\mathbb{C}\rightarrow \mathbb{C}$ is an endofunctor and $\theta:1_\mathbb{C}\rightarrow \mathcal{F}$ a natural transformation from the identity functor into $\mathcal{F}$ (see \cite[\S 1.1]{for:ins:sep:factorization}). If $(\mathcal{E},\mathcal{M})$ is an (orthogonal) factorization system on $\mathbb{C}$ such that all the morphisms in $\mathcal{E}$ are epimorphisms and $\mathcal{E}=\mathcal{E}'$, the largest subclass of $\mathcal{E}$ stable under pullbacks, then there exists a \emph{well-behaved} reflection $I:\mathbb{C}\rightarrow \mathbb{M}$ whose unit $\eta$ is given by the componentwise decomposition of $\theta=\mu\circ\eta$ using the $(\mathcal{E},\mathcal{M})$-factorization. Well-behaved means that the induced reflection $I$ has stable units, i.e., every unit morphism $\eta_C$ belongs to $\mathcal{E}'_I$, the largest subclass of $\mathcal{E}_I$ which is stable under pullbacks, being $(\mathcal{E}_I,\mathcal{M}_I)$ the reflective factorization system associated to the reflection $I$ (see \cite{CJKP:stab}). Having stable units is a stronger condition than semi-left-exactness (see \cite{CHK:fact}), also called admissibility in categorical Galois theory.

The process described in the previous paragraph allows that, from an adjunction from a category with pullbacks and a stable factorization system, it is induced a well-behaved reflection and the corresponding Galois theories.

It may happen that the induced well-behaved reflection is moreover \emph{very-well-behaved}, meaning that there is a monotone-light factorization system $(\mathcal{E}'_I,\mathcal{M}^*_I)$ associated to it (see \cite{CJKP:stab}). In order to be so, it only needs that, for every $C\in\mathbb{C}$, there is an effective descent morphism (see \cite{JST:edm}) into $C$ such that its domain is in the reflective subcategory.

In section \ref{sec-General Results} below it is presented a detailed study of the process outlined in the paragraphs above, followed by some generic examples and some results. These results were applied afterward to left and right Kan extensions for presheaves in sections \ref{sec:left Kan extensions} and \ref{sec:right Kan extensions} respectively.

In section \ref{sec:left Kan extensions}, a new class of monotone-light factorizations were presented, deriving from adjunctions given by left Kan extensions for presheaves. A non-trivial example of such was displayed in an astonishingly simple case (cf.\ \ref{example:colimit} below).

Then, in the rest of the paper, we focused in the more interesting context of right Kan extensions for presheaves, where there is an elegant characterization of the functors $K$ in $Set^K\dashv Ran_K$ for which the existence of a very-well-behaved induced reflection is assured (see \ref{theorem:RightKanCogeneratingVeryWellBehaved}).

The main new result of this paper (the crucial Lemma \ref{lemma:catModClosedUnderFactFinLim}) is displayed in the last section. Which allows to move to the new level of well-behaved subreflections derived from the induced well-behaved reflections, from an adjunction of right Kan extensions for presheaves. The subreflection is obtained considering just the models of a sketch (cf.\ the beginning of section \ref{sec:sub-reflectionsfromodels}). The last part of the paper states that this process, ending in very-well-behaved subreflections of models of presheaves, encompasses the already known very-well-behaved reflections from n-categories into n-preorders (for every positive integer n; cf.\ Examples \ref{example:m-l fact catsviapreord}, and subsections \ref{subsec:2-categories into 2-preorders}, \ref{subsec:n-categories into n-preorders}).

It is noticeable that our main examples are about presheaves, $\mathbb{S}=Set$. In future work, we hope to enlarge this setting with new examples without this restriction, namely with $\mathbb{S}$ a regular or a (Barr) exact category, and to explore the algebraic setting (it is well known that a variety of universal algebras can be identified with a category of models of a sketch for presheaves).

Anyway, we believe that in this work a very detailed and precise theoretical framework was established, which makes us optimistic about future developments applying it.\\

\textbf{Notice that all the basic concepts needed to read the following paper can be found in the bibliography, mostly in two items of it, namely \cite{SM:cat}  and \cite{CJKP:stab}.}

\section{Revision of General Results with Improvements}\label{sec-General Results}

\subsection{Derivable well-behaved epireflections}\label{subsec:derivable well-behaved epireflections}

Let us first show that, a \emph{well-behaved} epireflection is derivable from a monad in a category with pullbacks preserved by the endofunctor, provided there is a stable factorization system whose left class contains only epimorphisms (the needed conditions are in fact much weaker, as the reader shall discover; see Theorem \ref{theorem:stable units idempotent monad} below).

\begin{lemma}\label{lemma:pointed endofunctor}\

Let $(\mathcal{F},\theta)$ be a pointed endofunctor on $\mathbb{C}$ (pictured by \begin{picture}(55,10)(0,0)

\put (0,0){$\mathbb{C}$}\put (50,0){$\mathbb{C}$}

\put(10,12){\vector(1,0){35}}\put(25,15){$1_\mathbb{C}$}
\put(10,-6){\vector(1,0){35}}\put(25,-17){$\mathcal{F}$}

\put(23,0){$\Downarrow$}\put(32,0){$\theta$}

\end{picture} ).\vspace{20pt}

If there is a prefactorization system $(\mathcal{E},\mathcal{M})$ on $\mathbb{C}$ such that, for every $C\in\mathbb{C}$, each component factorizes as $$\theta_C=\mu_C\circ\eta_C:C\rightarrow\mathcal{I}(C)\rightarrow\mathcal{F}(C),$$ with $\eta_C\in\mathcal{E}$ and $\mu_C\in\mathcal{M}$, then there is a unique pointed endofunctor $(\mathcal{I},\eta)$ on $\mathbb{C}$ such that $\theta=\mu\cdot\eta$, where $\eta :1_\mathbb{C}\rightarrow \mathcal{I}$ and $\mu :\mathcal{I}\rightarrow \mathcal{F}$ are the obvious natural transformations.

\end{lemma}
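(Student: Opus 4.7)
The plan is to use the orthogonality of the prefactorization system to fill in diagonals, which simultaneously defines $\mathcal{I}$ on morphisms and forces naturality of $\eta$ and $\mu$.

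First, I would fix the assignment on objects: for each $C\in\mathbb{C}$, take $\mathcal{I}(C)$ to be the chosen middle object of the given factorization $\theta_C=\mu_C\circ\eta_C$.

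Second, I would define $\mathcal{I}$ on arrows via diagonal fill-in. Given $f:C\to D$, naturality of $\theta$ yields
\[
\mu_D\circ(\eta_D\circ f)=\theta_D\circ f=\mathcal{F}(f)\circ\theta_C=(\mathcal{F}(f)\circ\mu_C)\circ\eta_C,
\]
so the square with left edge $\eta_C\in\mathcal{E}$, right edge $\mu_D\in\mathcal{M}$, top $\eta_D\circ f$, and bottom $\mathcal{F}(f)\circ\mu_C$ commutes. The orthogonality $\mathcal{E}\downarrow\mathcal{M}$ coming from the prefactorization system supplies a \emph{unique} diagonal, which I name $\mathcal{I}(f):\mathcal{I}(C)\to\mathcal{I}(D)$. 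By construction it satisfies
\[
\mathcal{I}(f)\circ\eta_C=\eta_D\circ f,\qquad \mu_D\circ\mathcal{I}(f)=\mathcal{F}(f)\circ\mu_C,
\]
which are precisely the naturality squares for $\eta:1_\mathbb{C}\Rightarrow\mathcal{I}$ and $\mu:\mathcal{I}\Rightarrow\mathcal{F}$.

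Third, I would verify functoriality of $\mathcal{I}$ by a uniqueness-of-filler argument: for $1_C$, both $\mathcal{I}(1_C)$ and $1_{\mathcal{I}(C)}$ are diagonals for the square associated with $1_C$, hence they coincide; for a composite $g\circ f$, both $\mathcal{I}(g\circ f)$ and $\mathcal{I}(g)\circ\mathcal{I}(f)$ are diagonals for the outer rectangle obtained by stacking the two squares, so again they agree. Uniqueness of $(\mathcal{I},\eta)$ with $\theta=\mu\cdot\eta$ follows from the same principle: once the factorizations of the components $\theta_C$ are fixed, $\eta_C$ and $\mu_C$ are determined, and any functor $\mathcal{I}$ making $\eta$ and $\mu$ natural must have $\mathcal{I}(f)$ equal to the unique diagonal above.

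There is no real obstacle here beyond careful bookkeeping; the content of the lemma is essentially an application of the orthogonality axiom of a prefactorization system, and the only subtlety is to notice that one uses orthogonality three times (for the definition on arrows, for identities, and for composites), each time invoking the \emph{uniqueness} half of the diagonal fill-in.
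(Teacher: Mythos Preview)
Your proof is correct and follows the same approach as the paper, which simply remarks that the result is immediate from the definition of a prefactorization system and displays the naturality diagram in which $\mathcal{I}f$ is the unique diagonal. You have merely spelled out in full the functoriality and uniqueness checks that the paper leaves implicit.
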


\begin{proof}
The proof follows trivially from the definition of a prefactorization system (cf.\ \cite[\S 2.1]{CJKP:stab}). Confer the diagram

\begin{picture}(370,60)

\put(10,40){$C$}\put(25,43){\vector(1,0){60}}
\put(52,49){$\eta_C$}
\put(95,40){$\mathcal{I}(C)$}\put(130,43){\vector(1,0){60}}
\put(152,49){$\mu_C$}\put(200,40){$\mathcal{F}(C)$}

\put(15,33){\vector(0,-1){20}}\put(0,20){$f$}
\put(85,20){$\mathcal{I}f$}\put(105,33){\vector(0,-1){20}}
\put(195,20){$\mathcal{F}f$}\put(215,33){\vector(0,-1){20}}

\put(10,0){$C'$}\put(25,3){\vector(1,0){60}}
\put(52,9){$\eta_{C'}$}
\put(95,0){$\mathcal{I}(C')$}\put(130,3){\vector(1,0){60}}
\put(152,9){$\mu_{C'}$}\put(200,0){$\mathcal{F}(C')$,}

\end{picture}\vspace{5pt}

\noindent where $\mathcal{I}f$ is the unique morphism making it commutative.\end{proof}

\begin{lemma}\label{lemma:well-pointed endofunctor}

Under the conditions of Lemma \ref{lemma:pointed endofunctor}, if $\eta_C:C\rightarrow \mathcal{I}(C)$ is an epimorphism in $\mathbb{C}$ for every $C\in\mathbb{C}$, then the pointed endofunctor $(\mathcal{I},\eta)$ on $\mathbb{C}$ is well-pointed, i.e., $\mathcal{I}\eta =\eta \mathcal{I}$.

\end{lemma}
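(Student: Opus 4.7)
The plan is to invoke naturality of $\eta: 1_\mathbb{C}\to\mathcal{I}$ applied to the morphism $\eta_C$ itself, and then cancel on the right using the epimorphism hypothesis. Concretely, for each $C\in\mathbb{C}$, the morphism $\eta_C: C\to\mathcal{I}(C)$ is an arrow in $\mathbb{C}$, so naturality of $\eta$ yields the commutative square
$$
\mathcal{I}(\eta_C)\circ\eta_C \;=\; \eta_{\mathcal{I}(C)}\circ\eta_C,
$$
i.e.\ the two composites $C\to\mathcal{I}(C)\to\mathcal{I}(\mathcal{I}(C))$ agree. Since by hypothesis $\eta_C$ is an epimorphism, it can be cancelled on the right, giving $\mathcal{I}(\eta_C)=\eta_{\mathcal{I}(C)}$. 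Reading this componentwise as an equality of natural transformations $\mathcal{I}\to\mathcal{I}\mathcal{I}$ is exactly the well-pointedness condition $\mathcal{I}\eta=\eta\mathcal{I}$.

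There is essentially no obstacle: the content of Lemma \ref{lemma:pointed endofunctor} already guarantees that $\eta$ is a genuine natural transformation (the morphisms $\mathcal{I}f$ are supplied by the diagonal fill-in property of the prefactorization system), so naturality is free to use, and the epimorphism assumption converts the naturality square into the desired equality in a single step. No use of $\mathcal{M}$ or of the factorization $\theta=\mu\cdot\eta$ beyond what has already been invoked in Lemma \ref{lemma:pointed endofunctor} is required.

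If desired, the argument can be illustrated by the naturality square
$$
\begin{array}{ccc}
C & \xrightarrow{\eta_C} & \mathcal{I}(C) \\
{\scriptstyle \eta_C}\downarrow & & \downarrow{\scriptstyle \mathcal{I}(\eta_C)} \\
\mathcal{I}(C) & \xrightarrow{\eta_{\mathcal{I}(C)}} & \mathcal{I}(\mathcal{I}(C))
\end{array}
$$
together with the one-line epi-cancellation, mirroring the familiar proof that the unit of an idempotent monad satisfies $\mu\eta_{\mathcal{I}}= \mathrm{id}$ and $\eta_{\mathcal{I}}=\mathcal{I}\eta$.
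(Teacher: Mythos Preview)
Your proof is correct and is essentially the same as the paper's: the paper writes the identical argument at the level of the functor category $\mathbb{C}^\mathbb{C}$, observing that $\eta$ is an epimorphism there and that $(\mathcal{I}\eta)\cdot\eta=\eta\circ\eta=(\eta\mathcal{I})\cdot\eta$, which is exactly your componentwise naturality-plus-epi-cancellation packaged globally.
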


\begin{proof}
$\eta$ is an epimorphism in the functor category $\mathbb{C}^\mathbb{C}$ and

\hspace{100pt} $(\mathcal{I}\eta)\cdot\eta=\eta\circ\eta=(\eta\mathcal{I})\cdot\eta$\end{proof}

In the following Proposition \ref{proposition:idempotent endofunctor}, a statement in \cite[\S 1.1]{for:ins:sep:factorization} is proved in detail.

\begin{proposition}\label{proposition:idempotent endofunctor}

A well-pointed endofunctor $(\mathcal{I},\eta)$ on a category $\mathbb{C}$ is idempotent (i.e., $\eta\mathcal{I}$ is an isomorphism) if and only if $Fix(\mathcal{I},\eta)$ is a reflective full replete subcategory of $\mathbb{C}$ with reflection $\eta$, where $Fix(\mathcal{I},\eta)$ is determined by all the objects $M$($\in\mathbb{C}$) for which $\eta_M:M\rightarrow \mathcal{I}(M)$ is an isomorphism.

\end{proposition}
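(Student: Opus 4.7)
The plan is to treat the two implications separately, using the standing hypothesis $\mathcal{I}\eta=\eta\mathcal{I}$ (well-pointedness) in both. The key observation is that idempotency of $(\mathcal{I},\eta)$ is precisely the assertion that each object $\mathcal{I}(C)$ lies in $Fix(\mathcal{I},\eta)$, so the forward direction amounts to establishing the universal property of $\eta_C$ into this fixed subcategory, while the reverse direction is essentially a tautology once one unpacks the meaning of ``reflection with unit $\eta$''.

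For the forward direction I would first note that idempotency gives $\eta_{\mathcal{I}(C)}$ an isomorphism for each $C$, so $\mathcal{I}(C)\in Fix(\mathcal{I},\eta)$. Repleteness is immediate from naturality of $\eta$: if $\phi:M\to N$ is an isomorphism with $\eta_N$ invertible, the naturality square $\eta_N\circ\phi=\mathcal{I}(\phi)\circ\eta_M$ forces $\eta_M$ to be invertible as well. For the universal property, given $f:C\to M$ with $M\in Fix(\mathcal{I},\eta)$, I would define $\bar f:=\eta_M^{-1}\circ\mathcal{I}(f)$; naturality of $\eta$ at $f$ then yields $\bar f\circ\eta_C=\eta_M^{-1}\circ\eta_M\circ f=f$. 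For uniqueness, if $g\circ\eta_C=f$, I would apply naturality at $g$ to obtain $\eta_M\circ g=\mathcal{I}(g)\circ\eta_{\mathcal{I}(C)}$, then invoke well-pointedness to replace $\eta_{\mathcal{I}(C)}$ with $\mathcal{I}(\eta_C)$ and conclude $g=\eta_M^{-1}\circ\mathcal{I}(g\circ\eta_C)=\eta_M^{-1}\circ\mathcal{I}(f)=\bar f$. Fullness of $Fix(\mathcal{I},\eta)$ as a subcategory is included by definition.

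For the converse, if $Fix(\mathcal{I},\eta)$ is reflective with unit $\eta$, then by definition of ``reflection'' the codomain $\mathcal{I}(C)$ of each unit $\eta_C$ belongs to $Fix(\mathcal{I},\eta)$, so $\eta_{\mathcal{I}(C)}$ is an isomorphism for every $C$, which is exactly the statement that $\eta\mathcal{I}$ is a natural isomorphism, i.e.\ that $(\mathcal{I},\eta)$ is idempotent. The main point of care, and the only place where the argument is non-formal, is the uniqueness step in the forward direction: one must remember to convert $\eta_{\mathcal{I}(C)}$ into $\mathcal{I}(\eta_C)$ via the well-pointedness identity $\eta\mathcal{I}=\mathcal{I}\eta$ in order to feed the hypothesis $g\circ\eta_C=f$ back into the expression; without that substitution the two natural transformations $\eta\mathcal{I}$ and $\mathcal{I}\eta$ would not be available interchangeably and the proof would stall.
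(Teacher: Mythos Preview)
Your proof is correct and follows essentially the same approach as the paper: both directions are handled identically, with $\bar f=\eta_M^{-1}\circ\mathcal{I}(f)$ as the reflection map and well-pointedness used to secure uniqueness. The only cosmetic difference is that the paper phrases uniqueness by comparing two candidates $f',f''$ (applying $\mathcal{I}$ first, then naturality), whereas you argue directly that any $g$ with $g\circ\eta_C=f$ equals $\bar f$; the underlying computation is the same.
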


\begin{proof}
$Fix(\mathcal{I},\eta)$ is a replete subcategory of $\mathbb{C}$, since if $k:M\rightarrow N$ is an isomorphism from an object $M\in Fix(\mathcal{I},\eta)$, then $\eta_N=\mathcal{I}k\circ\eta_M\circ k^{-1}$ is also an isomorphism, assuring that $N\in Fix(\mathcal{I},\eta)$.

\noindent ($\Rightarrow$(only if))

If $(\mathcal{I},\eta)$ is idempotent, then any image $\mathcal{I}(C)$ belongs to $Fix(\mathcal{I},\eta)$, $C\in\mathbb{C}$ ($\eta \mathcal{I}$ is an isomorphism by hypothesis). Name $I=\mathcal{I}|_{Fix(\mathcal{I},\eta)}$ the co-restriction of the functor $\mathcal{I}$ to $Fix(\mathcal{I},\eta)$, and $H$ the full inclusion of $Fix(\mathcal{I},\eta)$ into $\mathbb{C}$. It is going to be shown that $I$ is left adjoint to $H$, with unit $\eta$.

Let $f:C\rightarrow M$ be any morphism whose codomain $M$ belongs to $Fix(\mathcal{I},\eta)$. Then, $\eta _M\circ f=\mathcal{I}f\circ \eta_C\Leftrightarrow f=(\eta^{-1}_M\circ \mathcal{I}f)\circ \eta_C$. It remains to check that $f'=\eta^{-1}_M\circ\mathcal{I}f$ is the unique morphism such that $f=f'\circ\eta_C$:
$$f''\circ\eta_C=f'\circ\eta_C\Rightarrow \mathcal{I}(f'')\circ\mathcal{I}(\eta_C)=\mathcal{I}(f')\circ\mathcal{I}(\eta_C)
$$
$$\Rightarrow \mathcal{I}(f'')\circ\eta_{\mathcal{I}(C)}=\mathcal{I}(f')\circ\eta_{\mathcal{I}(C)}\ (\mathcal{I}\eta=\eta\mathcal{I},\ by\ hypothesis)$$
$$\Rightarrow \eta_M\circ f''=\eta_M\circ f'\Rightarrow f''=f'\ (M\in Fix(\mathcal{I},\eta)).$$

\noindent ($\Leftarrow$(if))

If $\mathcal{I}(C)$ is the reflection of $C$, then $\mathcal{I}(C)\in Fix(\mathcal{I},\eta)$ and $\eta_{\mathcal{I}(C)}$ is an isomorphism. Hence, $\mathcal{I}\eta =\eta\mathcal{I}$ is an isomorphism.\end{proof}

\begin{theorem}\label{theorem:idempotent monad} Under the conditions of the previous Lemmas \ref{lemma:pointed endofunctor} and \ref{lemma:well-pointed endofunctor}, if $\mathcal{F}$ is a monad in which the unit is $\theta :1_\mathbb{C}\rightarrow \mathcal{F}$, then $(\mathcal{I},\eta)$ is idempotent, i.e., $\eta\mathcal{I}$ is an isomorphism (and $(\mathcal{I},\eta)$ is well-pointed: $\mathcal{I}\eta=\eta\mathcal{I}$).

More specifically, under the conditions of Lemma \ref{lemma:well-pointed endofunctor}, if $\mathcal{F}=GF$, with $G\vdash F:\mathbb{C}\rightarrow\mathbb{X}$ being an adjunction with unit $\theta :1_\mathbb{C}\rightarrow GF$, then there is a reflection $H\vdash I:\mathbb{C}\rightarrow \mathbb{M}$, where the right adjoint $H$ is the full inclusion of $\mathbb{M}=Fix(HI,\eta)$.

It also follows that $F\eta$ is an isomorphism.

Notice that $\mathbb{M}=Fix(\mathcal{I},\eta)$ is equal to $\mathcal{M}(F)$, the full replete subcategory of $\mathbb{C}$ determined by those objects $M$ such that $\theta_M:M\rightarrow \mathcal{F}(M)$ belongs to the right-class $\mathcal{M}$ of the prefactorization system on $\mathbb{C}$.\end{theorem}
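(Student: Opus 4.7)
\emph{Plan of proof.} I would attack the four assertions in the stated order: first the identification $\mathbb{M}=Fix(\mathcal{I},\eta)=\mathcal{M}(F)$, then the central idempotency statement that $\eta\mathcal{I}$ is iso, then the reflection via Proposition \ref{proposition:idempotent endofunctor}, and finally that $F\eta$ is an isomorphism.

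For the identification, one direction is immediate: if $\eta_M$ is an isomorphism then $\theta_M=\mu_M\circ\eta_M$ lies in $\mathcal{M}$, so $M\in\mathcal{M}(F)$. Conversely, if $\theta_M\in\mathcal{M}$ then both $\theta_M=1\circ\theta_M$ and $\theta_M=\mu_M\circ\eta_M$ are $(\mathcal{E},\mathcal{M})$-factorizations, and their essential uniqueness in any prefactorization system, together with $\eta_M$ being an epimorphism, places $\eta_M$ in $\mathcal{E}\cap\mathcal{M}$; in a prefactorization system this intersection is exactly the isomorphisms, since any $f\in\mathcal{E}\cap\mathcal{M}$ satisfies $f\perp f$ and so admits a two-sided inverse.

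The crux is idempotency. Naturality of $\theta$ applied to $\eta_C:C\to\mathcal{I}(C)$, combined with $\theta_C=\mu_C\circ\eta_C$ and cancellation of the epimorphism $\eta_C$, quickly yields $\theta_{\mathcal{I}(C)}=\mathcal{F}(\eta_C)\circ\mu_C$. Using the monad multiplication $\nu:\mathcal{F}^2\to\mathcal{F}$ and the right unit axiom $\nu\circ\mathcal{F}\theta=1_{\mathcal{F}}$, I would introduce the auxiliary map
\[ s\;:=\;\nu_C\circ\mathcal{F}(\mu_C)\circ\mu_{\mathcal{I}(C)}\;:\;\mathcal{I}\mathcal{I}(C)\to\mathcal{F}(C), \]
and verify
\[ s\circ\eta_{\mathcal{I}(C)}\;=\;\nu_C\circ\mathcal{F}(\mu_C)\circ\theta_{\mathcal{I}(C)}\;=\;\nu_C\circ\mathcal{F}(\theta_C)\circ\mu_C\;=\;\mu_C. \]
The commutative square with top $1_{\mathcal{I}(C)}$, bottom $s$, left $\eta_{\mathcal{I}(C)}\in\mathcal{E}$ and right $\mu_C\in\mathcal{M}$ then admits, by orthogonality, a diagonal $d:\mathcal{I}\mathcal{I}(C)\to\mathcal{I}(C)$ with $d\circ\eta_{\mathcal{I}(C)}=1_{\mathcal{I}(C)}$. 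Thus $\eta_{\mathcal{I}(C)}$ is a split monomorphism; being also an epimorphism, it is an isomorphism, which establishes idempotency.

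Idempotency together with the well-pointedness already granted by Lemma \ref{lemma:well-pointed endofunctor} triggers Proposition \ref{proposition:idempotent endofunctor}, producing the reflection $H\vdash I:\mathbb{C}\to\mathbb{M}$ with $\mathbb{M}=Fix(\mathcal{I},\eta)$. For $F\eta$ being an iso, the triangle identity $\varepsilon F\circ F\theta=1_F$ expanded via $\theta_C=\mu_C\circ\eta_C$ shows that $\varepsilon_{F(C)}\circ F(\mu_C)$ is a retraction of $F(\eta_C)$; applying the same identity at $\mathcal{I}(C)$, expanding $\theta_{\mathcal{I}(C)}=\mathcal{F}(\eta_C)\circ\mu_C$, and using naturality of $\varepsilon$ in the form $\varepsilon_{F\mathcal{I}(C)}\circ FGF(\eta_C)=F(\eta_C)\circ\varepsilon_{F(C)}$, exhibits the very same composite as a section of $F(\eta_C)$. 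Hence $F(\eta_C)$ is an isomorphism. The principal obstacle is locating the auxiliary map $s$ in the idempotency step: its construction exploits the monad multiplication to produce a retraction of $\mathcal{F}(\eta_C)$ from the right unit law, and would fail for a mere pointed endofunctor --- which is precisely why the theorem needs $\mathcal{F}$ to carry a full monad structure.
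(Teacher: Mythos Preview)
Your proof is correct, but your route to idempotency differs genuinely from the paper's. The paper does not use the monad multiplication at all: instead it shows directly that $\mu_C:\mathcal{I}(C)\to GF(C)$ is itself a universal arrow from $\mathcal{I}(C)$ to $G$. Given $f:\mathcal{I}(C)\to G(X)$, the universal property of $\theta_C$ yields $f':F(C)\to X$ with $G(f')\circ\theta_C=f\circ\eta_C$, and the epimorphy of $\eta_C$ then forces $G(f')\circ\mu_C=f$. Since $\mu_C$ and $\theta_{\mathcal{I}(C)}=\mu_{\mathcal{I}(C)}\circ\eta_{\mathcal{I}(C)}$ are both universal arrows from $\mathcal{I}(C)$ to $G$, they are isomorphic, and the prefactorization axioms make $\eta_{\mathcal{I}(C)}$ an isomorphism. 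Your argument instead manufactures an explicit retraction of $\eta_{\mathcal{I}(C)}$ via the multiplication $\nu$ and orthogonality; this has the minor advantage of treating the abstract monad case directly (the first clause of the theorem), whereas the paper argues only in the adjunction setting. For $F\eta$, the paper is again briefer: it observes that $F\eta$ is epic because left adjoints preserve epimorphisms, and split monic by the triangle identity --- your two-sided computation is correct but not needed. Finally, you supply a proof of $Fix(\mathcal{I},\eta)=\mathcal{M}(F)$ that the paper only asserts; this is a genuine addition.
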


\begin{proof}
Consider the diagram

\begin{picture}(370,60)

\put(10,40){$C$}\put(25,43){\vector(1,0){60}}
\put(52,49){$\eta_C$}
\put(95,40){$\mathcal{I}(C)$}\put(130,43){\vector(1,0){60}}
\put(152,49){$\mu_C$}\put(200,40){$GF(C)$}

\put(135,10){$f$}\put(115,33){\vector(3,-1){75}}
\put(220,20){$G(f')$}\put(215,33){\vector(0,-1){20}}

\put(200,0){$G(X)$\hspace{15pt}.}

\end{picture}\vspace{5pt}

By the universality of the unit morphism $\theta_C=\mu_C\circ\eta_C$, there exists a unique morphism $f':F(C)\rightarrow X$ such that $G(f')\circ\theta_C=f\circ\eta_C$, for every $f:\mathcal{I}(C)\rightarrow G(X)$. Then $G(f')\circ\mu_C=f$ because $\eta_C$ is an epimorphism. Hence, $\mu_C$ ($\in\mathcal{M}$) is also an universal arrow from $\mathcal{I}(C)$ to $G$, and so it must be isomorphic to $\theta_{\mathcal{I}(C)}=\mu_{\mathcal{I}(C)}\circ\eta_{\mathcal{I}(C)}$ which implies that $\eta_{\mathcal{I}(C)}$ ($\in\mathcal{E}$) is an isomorphism, by the properties of a prefactorization system $(\mathcal{E},\mathcal{M})$ (cf.\ \cite[Proposition 2.2]{CJKP:stab}).

As $F$ is a left-adjoint and every $\eta_C:C\rightarrow \mathcal{I}(C)$ is an epimorphism in $\mathbb{C}$, then $F\eta$ is an epimorphism. It is also a split monic since $F=\varepsilon F\cdot F\theta =(\varepsilon F\cdot F\mu)\cdot F\eta$, with $\varepsilon$ the counit of the adjunction $G\vdash F$. Therefore, $F\eta$ is an isomorphism.\end{proof}

\begin{remark}\label{remark:factorizarion of the Kleisli construction}

Concerning the immediately above Theorem \ref{theorem:idempotent monad}, it is easy to conclude that $\eta_{G(X)}$ is an isomorphism for all $X\in\mathbb{X}$, since $G\varepsilon_X\circ\theta_{G(X)}=(G\varepsilon_X\circ\mu_{G(X)})\circ\eta_{G(X)}=1_{G(X)}$ and $\eta_C$ is an epimorphism for all $C\in\mathbb{C}$, with $\varepsilon$ the counit of the adjunction $G\vdash F$. Hence, the restriction $G_\mathbb{M}:\mathbb{X}\rightarrow\mathbb{M}$ of the right adjoint $G$ to the codomain $\mathbb{M}$ is well defined. The left adjoint of $G_\mathbb{M}$ is obviously the restriction $F_\mathbb{M}:\mathbb{M}\rightarrow \mathbb{X}$ of $F$ to $\mathbb{M}$. As $G=H\circ G_\mathbb{M}$, then $F\cong F_\mathbb{M}\circ I$. The unit of the adjunction $F_\mathbb{M}\dashv G_\mathbb{M}$ is exactly the restriction of $\mu$ to $\mathbb{M}$, provided one chooses $\eta_M=1_M$ in every factorization $\theta_M=\mu_M\circ\eta_M$, for every $M\in\mathbb{M}$, which is equivalent to that the identity $1_\mathbb{M}$ is the counit of the induced reflection ($I(M)=M$, for every $M\in\mathbb{M}$).\end{remark}

\begin{theorem}\label{theorem:stable units idempotent monad}
Under the conditions of Theorem \ref{theorem:idempotent monad}, for a category $\mathbb{C}$ with pullbacks, the replete full reflection $I\dashv H:\mathcal{M}(F)\rightarrow\mathbb{C}$ has stable units if the following two conditions hold:\\

$(a)$ $\mathcal{F}=GF$ preserves the pullback squares of the form

\begin{picture}(210,70)
\put (60,0){$A$}\put (35,50){$A\times_{\mathcal{I}(C)}C$}
\put (145,0){$\mathcal{I}(C)$\ ;}\put(148,50){$C$}

\put(28,25){$g^*(\eta_C)$}\put(160,25){$\eta_C$}
\put(100,10){$g$}

\put(72,3){\vector(1,0){68}}\put(93,55){\vector(1,0){47}}
\put(65,45){\vector(0,-1){33}}\put(155,45){\vector(0,-1){33}}
\end{picture}\vspace{5pt}

$(b)$ $\eta_C\in\mathcal{E}'$ for every $C\in\mathbb{C}$, where $\mathcal{E}'$ stands for the largest subclass of $\mathcal{E}$ which is closed under pullbacks.\\

 Notice also that $I\dashv H:\mathcal{M}(F)\rightarrow\mathbb{C}$ does have stable units only if the condition $(a)$ above holds, for a category $\mathbb{C}$ with pullbacks and under the conditions of Theorem \ref{theorem:idempotent monad}.\end{theorem}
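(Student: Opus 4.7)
The plan is to prove both asserted implications by examining the same pullback. Fix $C\in\mathbb{C}$ and $g\colon A\to\mathcal{I}(C)$, form $P=A\times_{\mathcal{I}(C)}C$ and let $\tilde\eta:=g^{*}(\eta_C)\colon P\to A$; by definition, stable units is the assertion that each such $\tilde\eta$ lies in $\mathcal{E}_I$, i.e.\ $\mathcal{I}(\tilde\eta)$ is invertible. The crucial input already produced inside Theorem~\ref{theorem:idempotent monad} is that $F\eta$, and hence $\mathcal{F}(\eta_C)=GF(\eta_C)$, is an isomorphism.

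For sufficiency, condition~(a) makes $\mathcal{F}(P)$ into the pullback of $\mathcal{F}(\eta_C)$ along $\mathcal{F}(g)$; as the pullback of an isomorphism, $\mathcal{F}(\tilde\eta)$ is itself invertible. Naturality of $\theta$ then rewrites as
$$\theta_P\;=\;\bigl(\mathcal{F}(\tilde\eta)^{-1}\circ\mu_A\bigr)\circ\bigl(\eta_A\circ\tilde\eta\bigr).$$
By condition~(b) and the pullback-stability of $\mathcal{E}'$, $\tilde\eta\in\mathcal{E}$, so $\eta_A\circ\tilde\eta\in\mathcal{E}$ because the $\mathcal{E}$-class of a prefactorization system is closed under composition; the other factor lies in $\mathcal{M}$ because $\mathcal{M}$ absorbs composition with isomorphisms. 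Uniqueness of the $(\mathcal{E},\mathcal{M})$-factorization of $\theta_P$ granted by Lemma~\ref{lemma:pointed endofunctor} then supplies a unique isomorphism $\phi\colon\mathcal{I}(P)\to\mathcal{I}(A)$ with $\phi\circ\eta_P=\eta_A\circ\tilde\eta$. Comparing with the naturality square of $\eta$, which reads $\mathcal{I}(\tilde\eta)\circ\eta_P=\eta_A\circ\tilde\eta$, and cancelling the epimorphism $\eta_P$ on the right, one obtains $\mathcal{I}(\tilde\eta)=\phi$, hence an isomorphism. Since $g$ was arbitrary, $\eta_C\in\mathcal{E}'_I$ and the reflection has stable units.

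For the converse, assume stable units, so $I(\tilde\eta)$ is invertible. The decomposition $GF\cong H\circ G_{\mathbb{M}}\circ F_{\mathbb{M}}\circ I$ recorded in Remark~\ref{remark:factorizarion of the Kleisli construction} immediately turns this into $\mathcal{F}(\tilde\eta)$ being invertible. Since $\mathcal{F}(\eta_C)$ is already an isomorphism, the canonical comparison morphism from $\mathcal{F}(P)$ to the pullback $\mathcal{F}(A)\times_{\mathcal{F}(\mathcal{I}(C))}\mathcal{F}(C)$ (which exists because $\mathbb{C}$ has pullbacks) sits between two copies of $\mathcal{F}(A)$ each receiving an isomorphism, and is therefore itself an isomorphism; this is exactly condition~(a).

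The expected obstacle is identifying the displayed composite as the $(\mathcal{E},\mathcal{M})$-factorization of $\theta_P$: once that is in place, the uniqueness clause of the factorization together with the epimorphism hypothesis on $\mathcal{E}$ handle both directions mechanically.
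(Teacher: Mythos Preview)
Your proof is correct and follows essentially the same strategy as the paper's: both directions hinge on the fact that $\mathcal{F}(\tilde\eta)$ is an isomorphism, and the sufficiency argument amounts to recognising a second $(\mathcal{E},\mathcal{M})$-factorisation of $\theta_P$. The only cosmetic difference is that the paper bypasses your detour through uniqueness-of-factorisation and the epi-cancellation of $\eta_P$, instead invoking the cancellation properties of $\mathcal{E}$ and $\mathcal{M}$ in a prefactorisation system directly (from $\mathcal{I}(\tilde\eta)\circ\eta_P\in\mathcal{E}$ with $\eta_P\in\mathcal{E}$ conclude $\mathcal{I}(\tilde\eta)\in\mathcal{E}$, and from $\mu_A\circ\mathcal{I}(\tilde\eta)\in\mathcal{M}$ with $\mu_A\in\mathcal{M}$ conclude $\mathcal{I}(\tilde\eta)\in\mathcal{M}$); your citation of Lemma~\ref{lemma:pointed endofunctor} for the uniqueness is slightly off, since that lemma only records existence---the uniqueness you use is the diagonal property of the prefactorisation system itself.
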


\begin{proof}
The final remark in the statement is going to be proved first. If the reflection $I\dashv H$ has stable units, then, by definition, the image by $\mathcal{I}$ of the pullback diagram in the statement is also a pullback diagram, meaning that both $\mathcal{I}\eta_C$ and $\mathcal{I}g^*(\eta_C)$ are isomorphisms (the former is so since $\mathcal{I}\eta =\eta\mathcal{I}$ is an isomorphism). Then, the image by $\mathcal{FI}$ of the pullback diagram in the statement is necessarily a pullback diagram. Finally, $\mathcal{F}$ preserves the pullback diagram in the statement, since $F\eta$ being an isomorphism implies $\mathcal{F}\cong\mathcal{FI}$.\\

For the if part of the theorem, consider the following commutative diagram

\begin{picture}(370,60)

\put(10,40){$P$}\put(25,43){\vector(1,0){60}}
\put(52,49){$\eta_P$}
\put(95,40){$\mathcal{I}(P)$}\put(130,43){\vector(1,0){60}}
\put(152,49){$\mu_P$}\put(200,40){$\mathcal{F}(P)$}

\put(15,33){\vector(0,-1){20}}\put(20,20){$g^*(\eta_C)$}
\put(110,20){$\mathcal{I}g^*(\eta_C)$}\put(105,33){\vector(0,-1){20}}
\put(220,20){$\mathcal{F}g^*(\eta_C)$}\put(215,33){\vector(0,-1){20}}

\put(10,0){$A$}\put(25,3){\vector(1,0){60}}
\put(52,9){$\eta_{A}$}
\put(95,0){$\mathcal{I}(A)$}\put(130,3){\vector(1,0){60}}
\put(152,9){$\mu_A$}\put(200,0){$\mathcal{F}(A)$,}

\end{picture}\vspace{5pt}
\noindent where $P=A\times_{\mathcal{I}(C)}C$. We need to show that $\mathcal{I}g^*(\eta_C)$ is an isomorphism, or equivalently that $\mathcal{I}g^*(\eta_C)\in \mathcal{E}\cap\mathcal{M}$, using the properties of prefactorization systems (cf.\ \cite[Proposition 2.2]{CJKP:stab}): as $\eta_C\in \mathcal{E}'$, then $\eta_A\circ g^*(\eta_C)=\mathcal{I}g^*(\eta_C)\circ\eta_P\in \mathcal{E}$, implying that $\mathcal{I}g^*(\eta_C)\in\mathcal{E}$; as $\mathcal{F}g^*(\eta_C)$ is an isomorphism (by hypothesis $(a)$, since $F\eta$ is an isomorphism) then $\mu_A\circ\mathcal{I}g^*(\eta_C)\cong\mu_P\in\mathcal{M}$, implying that $\mathcal{I}g^*(\eta_C)\in\mathcal{M}$ ($\mu_A\in\mathcal{M}$).\end{proof}

\begin{remark}\label{remark:mistake}
We take the opportunity here to report a mistake in our paper \cite{X:well behaved}. Theorem 2.4 in \cite{X:well behaved} corresponds to Theorem \ref{theorem:stable units idempotent monad} just above, but in the latter the two conditions $(a)$ and $(b)$ are now only sufficient for the derived reflection to have stable units. Fortunately, this mistake had no impact on the other results of \cite{X:well behaved}, since only the sufficiency was used.
\end{remark}

\begin{examples}\label{example:regular cats} Consider any adjunction $G\vdash F:\mathbb{C}\rightarrow \mathbb{X}$ with unit $\theta :1_\mathbb{C}\rightarrow GF$, in which $\mathbb{C}$ is a regular category (i.e., it is finitely complete, it has coequalizers, and the class of its regular epimorphisms is closed under pullbacks) and the left adjoint $F$ preserves pullbacks. Then, there is a reflection $H\vdash I:\mathbb{C}\rightarrow Mono(F)$ with stable units, where $Mono(F)$ is a full (replete) subcategory of $\mathbb{C}$ determined by the objects $M\in\mathbb{C}$ for which the unit morphism $\theta_M:M\rightarrow GF(M)$ is a monomorphism. The unit $\eta :1_\mathbb{C}\rightarrow HI$ consists componentwise of regular epimorphisms of $\mathbb{C}$; in fact, $(\mathcal{E},\mathcal{M})=(Regular Epimorphisms,\ Monomorphisms)$ is an (orthogonal) factorization system on $\mathbb{C}$.
\end{examples}

\subsection{Derivable very-well-behaved epireflections}\label{subsec:derivable very-well-behaved epireflections}

\begin{theorem}\label{theorem:stable units+enough edm}
Under the conditions of Theorem \ref{theorem:idempotent monad}, if the derived reflection $H\vdash I:\mathbb{C}\rightarrow \mathcal{M}(F)$ has stable units, and for every object $C(\in\mathbb{C})$ there is an effective descent morphism $p:M\rightarrow C$ (in $\mathbb{C}$) with domain in $\mathcal{M}(F)$, then there is a monotone-light factorization system $(\mathcal{E}'_I,\mathcal{M}^*_I)$ on $\mathbb{C}$, obtained by simultaneously stabilizing and localizing the reflective factorization system $(\mathcal{E}_I,\mathcal{M}_I)$.
\end{theorem}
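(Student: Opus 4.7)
The plan is to invoke the main monotone--light factorization theorem of \cite{CJKP:stab}, whose conclusion is precisely the existence of a factorization system $(\mathcal{E}'_I,\mathcal{M}^*_I)$ obtained by simultaneously stabilizing the left class and localizing the right class of a reflective (pre)factorization system. The proof therefore reduces to checking that the present hypotheses match those of that theorem.

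First I would observe that, because the reflection $I\dashv H$ has stable units, the left class $\mathcal{E}_I$ of the reflective prefactorization is already pullback--stable, so $\mathcal{E}_I=\mathcal{E}'_I$ and the stabilization step is automatic. It then remains to exhibit, for each morphism $f:A\to B$ of $\mathbb{C}$, a factorization $f=m\circ e$ with $e\in\mathcal{E}'_I$ and with $m$ in the candidate right class $\mathcal{M}^*_I$, defined as the class of morphisms whose pullback along some effective descent morphism into their codomain lies in $\mathcal{M}_I$.

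To construct such a factorization I would take, using the hypothesis, an effective descent morphism $p:M\to B$ with $M\in\mathcal{M}(F)$, pull $f$ back to $p^{*}(f):A\times_{B}M\to M$, and factor this morphism by the reflective $(\mathcal{E}_I,\mathcal{M}_I)$--factorization over the base $M$. Since $M$ already belongs to the reflective subcategory, the resulting $\mathcal{M}_I$--part has a particularly well--behaved form. Effective descent along $p$ then allows this upstairs factorization to be transported down to one of $f$ itself: the left factor descends into $\mathcal{E}'_I$ thanks to pullback--stability of that class, while the right factor lands in $\mathcal{M}^*_I$ essentially by the definition of that localized class.

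The main obstacle, and the sole reason for requiring the domain of $p$ to lie in $\mathcal{M}(F)$, is the descent step itself: one must verify that the $\mathcal{M}_I$--part of the upstairs factorization really descends to a morphism into $B$ whose pullback along $p$ recovers it up to canonical isomorphism, and that this descended morphism genuinely lies in $\mathcal{M}^*_I$. It is here that one exploits in full that $p$ is an effective descent morphism (not merely a descent morphism) together with the fact that its domain sits inside the reflective subcategory, so that the $(\mathcal{E}_I,\mathcal{M}_I)$--data upstairs is compatible with descent. These verifications are exactly those packaged into the general theorem of \cite{CJKP:stab}, and the present statement follows by a direct application.
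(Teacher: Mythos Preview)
Your overall strategy---reducing everything to the main theorem of \cite{CJKP:stab}---is exactly what the paper does (via Corollary~6.2 of \cite{X:iml}), so in that respect you are on the right track.

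However, your opening observation contains a genuine error: ``stable units'' does \emph{not} mean that $\mathcal{E}_I$ is pullback-stable. By definition (see the Introduction), stable units only says that each unit morphism $\eta_C$ lies in $\mathcal{E}'_I$; in general $\mathcal{E}'_I\subsetneq\mathcal{E}_I$. Indeed, the whole interest of the monotone-light factorization lies in the cases where this inclusion is strict---the paper explicitly exhibits such ``non-trivial'' examples (Examples~\ref{example:colimit}, and the discussion of $Cat\to Preord$ in Examples~\ref{example:m-l fact catsviapreord}). Your claim that $\mathcal{E}_I=\mathcal{E}'_I$ and that ``the stabilization step is automatic'' would, if true, collapse $(\mathcal{E}'_I,\mathcal{M}^*_I)$ back to $(\mathcal{E}_I,\mathcal{M}_I)$ and make the theorem vacuous.

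What stable units actually provides, in the language of \cite{CJKP:stab}, is that the reflective factorization is \emph{locally stable}: pulling back commutes with taking the $(\mathcal{E}_I,\mathcal{M}_I)$-factorization. It is this, together with the supply of effective descent morphisms from objects of $\mathcal{M}(F)$, that feeds into the hypotheses of the main result of \cite{CJKP:stab}. Your later paragraphs gesture at roughly the right descent mechanism, but the argument cannot stand on the incorrect premise that $\mathcal{E}_I$ is already stable.
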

 \begin{proof}
 This small result (but crucial in this work) is included in Corollary 6.2 in \cite{X:iml}, which follows from the main result of \cite{CJKP:stab}.
 \end{proof}

 \begin{examples}\label{example:Barr exact cats}
 Under the conditions of Examples \ref{example:regular cats}, let $\mathbb{C}$ be not only regular, but furthermore (Barr) exact, which is known to imply that in $\mathbb{C}$ the effective descent morphisms are the regular epimorphisms (cf.\ \cite{JST:edm}). Suppose also that $\mathbb{C}$ is cocomplete, and that there is a full subcategory $\mathbb{N}$ of $\mathbb{C}$ which is dense in $\mathbb{C}$ (i.e., every $C\in\mathbb{C}$ is a colimit of objects of $\mathbb{N}$) and closed in $Mono(F)(=\mathcal{M}(F))$ for coproducts in $\mathbb{C}$ (hence, the canonical presentation of every $C\in\mathbb{C}$ as a colimit gives $p:M\rightarrow C$, with $M=\coprod_{i\in I}N_i\in Mono(F)$ and $p$ a regular epimorphism, that is, an effective descent morphism).

 Then, by Theorems \ref{theorem:stable units idempotent monad} and \ref{theorem:stable units+enough edm}, it follows that the induced reflection, from the left adjoint functor $F$ preserving pullbacks, produces a monotone-light factorization system on $\mathbb{C}$.

 Remark that, by next Lemma \ref{lemma:ob(N) generating set for C}$(b)$, instead of demanding that the dense subcategory $\mathbb{N}$ is closed in $Mono(F)$ for coproducts, one could alternatively ask $\mathbb{N}$ to be a dense subcategory of $\mathbb{C}$ closed under coproducts and such that the restriction of the left-adjoint $F$ to $\mathbb{N}$ is faithful.
 \end{examples}

  A set $S$ of objects of a category $\mathbb{C}$ is a generating set if, for every two distinct parallel morphisms $f,f'\in\mathbb{C}$, there exists an object $N\in S$ and a morphism $g$ with domain $N$ such that $f\circ g\neq f'\circ g$.

  Notice that the objects of a full subcategory $\mathbb{N}$ which is dense in $\mathbb{C}$ are obviously a generating set for $\mathbb{C}$.\\

The following Lemma \ref{lemma:ob(N) generating set for C} is needed in the proof of next Theorems \ref{theorem:coproduct of unit morphisms} and \ref{theorem:ob(N) generating set for C}, which give conditions under which an adjunction from a category of presheaves has \emph{enough} effective descent morphisms, in the sense of Theorem \ref{theorem:stable units+enough edm}.

\begin{lemma}\label{lemma:ob(N) generating set for C}
Consider any adjunction $G\vdash F:\mathbb{C}\rightarrow \mathbb{X}$ with unit $\theta:1_\mathbb{C}\rightarrow GF$.

Suppose there is a full subcategory $\mathbb{N}$ of $\mathbb{C}$ whose set of objects $ob(\mathbb{N})$ is a generating set for $\mathbb{C}$.

\noindent Then:\vspace{3pt}

\noindent $(a)$ for any object $C\in\mathbb{C}$, the unit morphism $\theta_C:C\rightarrow GF(C)$ is a monomorphism if and only if, for every object $N\in\mathbb{N}$, the restrictions $F_{N,C}: \mathbb{C}(N,C)\rightarrow \mathbb{X}(F(N),F(C))$ of the functor $F$ are all injective;\vspace{3pt}

\noindent $(b)$\cite[Lemma 4.2]{X:concordant&monotone} each unit morphism $\theta_N:N\rightarrow GF(N)$ is a monomorphism, $N\in\mathbb{N}$, if and only if the restriction of the left -adjoint $F$ to $\mathbb{N}$ is faithful.
\end{lemma}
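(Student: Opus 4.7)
The plan is to exploit the adjunction bijection $\mathbb{X}(F(N),F(C))\cong \mathbb{C}(N,GF(C))$ to translate the injectivity of $F_{N,C}$ into injectivity of post-composition with $\theta_C$, and then to use the defining property of a generating set to test monomorphicity of $\theta_C$ against only the objects of $\mathbb{N}$.

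First, I would record the key identity. For $f\in\mathbb{C}(N,C)$, naturality of $\theta$ gives $G(F(f))\circ\theta_N=\theta_C\circ f$, so the transpose of $F(f)$ under the adjunction $F\dashv G$ is exactly $\theta_C\circ f$. Since the adjunction bijection is, well, a bijection, this means $F_{N,C}:\mathbb{C}(N,C)\to\mathbb{X}(F(N),F(C))$ is injective if and only if the map $(\theta_C)_*:\mathbb{C}(N,C)\to\mathbb{C}(N,GF(C))$, $f\mapsto\theta_C\circ f$, is injective.

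For (a), the direction ($\Rightarrow$) is immediate: if $\theta_C$ is a monomorphism, then for every object $A$ and every pair $f,f':A\to C$, $\theta_C\circ f=\theta_C\circ f'$ forces $f=f'$; specialising $A=N\in\mathbb{N}$ yields injectivity of $(\theta_C)_*$, hence of $F_{N,C}$. For the converse ($\Leftarrow$), suppose $F_{N,C}$ is injective for every $N\in\mathbb{N}$ and let $f,f':A\to C$ satisfy $\theta_C\circ f=\theta_C\circ f'$. If $f\neq f'$, the generating property of $\mathrm{ob}(\mathbb{N})$ provides some $N\in\mathbb{N}$ and $g:N\to A$ with $f\circ g\neq f'\circ g$. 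But $\theta_C\circ(f\circ g)=\theta_C\circ(f'\circ g)$, contradicting the injectivity of $(\theta_C)_*$ at $N$. Hence $f=f'$ and $\theta_C$ is monic.

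For (b), I would simply specialise (a) by taking $C=N\in\mathbb{N}$, and invoke the fullness of $\mathbb{N}$ in $\mathbb{C}$, which gives $\mathbb{C}(N',N)=\mathbb{N}(N',N)$ for all $N',N\in\mathbb{N}$. Then the statement that every $F_{N',N}$ is injective (for $N',N\in\mathbb{N}$) is the very definition of faithfulness of $F|_\mathbb{N}$, and (a) applied at each $N\in\mathbb{N}$ concludes the equivalence.

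No step presents any real difficulty; the only point deserving attention is keeping straight that the generating-set property is applied in the converse direction of (a), where it is precisely what allows one to detect $\theta_C\circ f\neq\theta_C\circ f'$ by probing with morphisms whose domain lies in $\mathbb{N}$.
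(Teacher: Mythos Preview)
Your proof is correct and follows essentially the same approach as the paper's: both arguments hinge on the fact that the adjunction identifies $F_{N,C}$ with post-composition by $\theta_C$ on $\mathbb{C}(N,C)$, and then invoke the generating-set property in the $(\Leftarrow)$ direction of $(a)$, with $(b)$ obtained by specialising $C\in\mathbb{N}$. The only cosmetic difference is that you isolate the equivalence ``$F_{N,C}$ injective $\Leftrightarrow$ $(\theta_C)_*$ injective'' up front, whereas the paper weaves it into each direction separately.
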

\begin{proof}
\noindent $(a)$\vspace{3pt}

\noindent $(\Rightarrow)$ $Ff=Ff':F(N)\rightarrow F(C) \Rightarrow GFf\circ\theta_N=GFf'\circ\theta_N$

$\Rightarrow\theta_C\circ f=\theta_C\circ f'$ ($\theta$ is a natural transformation)

$\Rightarrow f=f'$ ($\theta_C$ is a monomorphism by hypothesis)

\noindent (in fact, it was proved that $F_{N,C}$ is injective for any $N\in\mathbb{C}$, provided $\theta_C$ is a monomorphism, and not only for $N\in\mathbb{N}$).\vspace{3pt}

\noindent $(\Leftarrow)$ Suppose, by \emph{reductio ad absurdum}, that there are $g\neq g':C'\rightarrow C$ such that $\theta_C\circ g=\theta_C\circ g'$. Being $ob(\mathbb{N})$ a generating set for $\mathbb{C}$, there exists $\tau :N\rightarrow C'$ with $N\in\mathbb{N}$, such that $g\circ\tau\neq g'\circ\tau$. Then, $\theta_C\circ g\circ\tau=\theta_C\circ g'\circ\tau$ implies that $F(g\circ\tau)=F(g'\circ\tau)$, because $\theta_N$ is a unit morphism, and so $g\circ\tau =g'\circ\tau$ by hypothesis, which is a contradiction.\vspace{3pt}

\noindent $(b)$ The proof of this item follows immediately from item $(a)$ by just considering all $C\in \mathbb{N}$.\end{proof}

The following general result of Lemma \ref{lemma:unit of coproduct = coproduct of units} is needed in the proof of the consecutive Theorem \ref{theorem:coproduct of unit morphisms}.

\begin{lemma}\label{lemma:unit of coproduct = coproduct of units}
Consider an adjunction $G\vdash F:\mathbb{C}\rightarrow \mathbb{X}$ with unit $\theta:1_\mathbb{C}\rightarrow GF$. Suppose that, for the family $(T_i)_{i\in I}$ of objects of $\mathbb{C}$, there exists its coproduct $\coprod_{i\in I}T_i$ in $\mathbb{C}$. Then, $$\theta_{\coprod_{i\in I}T_i}=\coprod_{i\in I}\theta_{T_i},$$ provided the right-adjoint $G$ preserves the coproduct $\coprod_{i\in I}F(T_i)$ in $\mathbb{X}$.
\end{lemma}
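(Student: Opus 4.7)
The plan is to exploit naturality of $\theta$ together with the fact that both $F$ and (by hypothesis) $G$ carry the relevant coproduct to a coproduct, so that $GF(\coprod T_i)$ genuinely \emph{is} $\coprod GF(T_i)$ (up to canonical iso).

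First I would fix the coproduct injections $\iota_i : T_i \to \coprod_{i\in I} T_i$ in $\mathbb{C}$. Since $F$ is a left adjoint, it preserves all colimits, so $F(\coprod_i T_i)$ together with the morphisms $F(\iota_i)$ is a coproduct of the family $(F(T_i))_{i\in I}$ in $\mathbb{X}$. By the hypothesis that $G$ preserves the coproduct $\coprod_{i\in I} F(T_i)$, the object $GF(\coprod_i T_i)$ with the morphisms $GF(\iota_i)$ is a coproduct of the family $(GF(T_i))_{i\in I}$ in $\mathbb{C}$. In other words, up to the canonical identification, the morphisms $GF(\iota_i)$ \emph{are} the coproduct injections into $\coprod_i GF(T_i)$.

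Next I would invoke naturality of $\theta : 1_\mathbb{C} \to GF$ on each injection $\iota_i$, giving the commutative square
\[
\theta_{\coprod_i T_i} \circ \iota_i \;=\; GF(\iota_i) \circ \theta_{T_i}
\]
for every $i \in I$. By the previous paragraph, the right-hand side is precisely the composite of $\theta_{T_i}$ with the $i$-th coproduct injection into $\coprod_i GF(T_i)$. Hence $\theta_{\coprod_i T_i}$ is a morphism from $\coprod_i T_i$ to $\coprod_i GF(T_i)$ whose composite with each $\iota_i$ coincides with the defining data of $\coprod_i \theta_{T_i}$.

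Finally, the universal property of the coproduct $\coprod_i T_i$ (uniqueness of the mediating morphism) forces $\theta_{\coprod_i T_i} = \coprod_{i\in I} \theta_{T_i}$, which is the desired equality. There is no real obstacle here; the only subtlety is keeping straight the canonical isomorphism $GF(\coprod_i T_i) \cong \coprod_i GF(T_i)$ under which the identification is made, and making sure that the hypothesis on $G$ is used exactly to promote this isomorphism from something existing in $\mathbb{X}$ (automatic from $F$ being a left adjoint) to something whose image under $G$ is again a coproduct in $\mathbb{C}$.
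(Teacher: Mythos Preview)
Your proof is correct and follows essentially the same approach as the paper: both arguments identify $GF(\iota_i)$ with the coproduct injections of $\coprod_i GF(T_i)$ (using that $F$ preserves colimits and the hypothesis on $G$), apply naturality of $\theta$ on each $\iota_i$, and then invoke the universal property of $\coprod_i T_i$ to conclude. The paper merely writes the identification $GF(\iota_{T_i})=\iota_{GF(T_i)}$ as an equality rather than emphasizing the canonical isomorphism, but the content is the same.
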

\begin{proof}
Let $\iota_{T_i}:T_i\rightarrow \coprod_{i\in I}T_i$, $\iota_{F(T_i)}=F\iota_{T_i}:F(T_i)\rightarrow \coprod_{i\in I}F(T_i)=F(\coprod_{i\in I}T_i)$ and $\iota_{GF(T_i)}=G\iota_{F(T_i)}=GF\iota_{T_i}:GF(T_i)\rightarrow \coprod_{i\in I}GF(T_i)=G\coprod_{i\in I}F(T_i)=GF(\coprod_{i\in I}T_i)$ denote the three coproduct injections in question (recall that $F$ preserves colimits, because it is a left adjoint).

Since $\theta$ is a natural transformation, $$\theta_{\coprod_{i\in I}T_i}\circ\iota_{T_i}=GF(\iota_{T_i})\circ\theta_{T_i}.$$

By definition of $\coprod_{i\in I}\theta_{T_i}$, $$\coprod_{i\in I}\theta_{T_i}\circ\iota_{T_i}=\iota_{GF( T_i)}\circ\theta_{T_i}.$$

Hence, as $GF(\iota_{T_i})=\iota_{GF( T_i)}$, $\theta_{\coprod_{i\in I}T_i}\circ\iota_{T_i}=\coprod_{i\in I}\theta_{T_i}\circ\iota_{T_i}$ for all $i\in I$, implying that $\theta_{\coprod_{i\in I}T_i}=\coprod_{i\in I}\theta_{T_i}$ by the definition of a coproduct $\coprod_{i\in I}T_i$.\end{proof}

It is convenient now to recall that, in a category of presheaves $Set^{\mathbb{D}^{op}}$, the representable functors $\mathbb{D}(-,D)$ constitute a generating set, $D\in\mathbb{D}$ (cf.\ \cite[\S III.7]{SM:cat}.

\begin{theorem}\label{theorem:coproduct of unit morphisms}\cite[See Remark 6.1]{X:concordant&monotone} Consider any adjunction $G\vdash F:\mathbb{C}\rightarrow \mathbb{X}$ with unit $\theta:1_\mathbb{C}\rightarrow GF$, where $\mathbb{C}=Set^{\mathbb{D}^{op}}$ is a category of presheaves:\vspace{3pt}

\noindent if\vspace{3pt}

$\bullet$ $F\circ y$ is faithful

(where $y:\mathbb{D}\rightarrow Set^{\mathbb{D}^{op}}$ is the Yoneda embedding) and

$\bullet$ $G$ preserves coproducts,\vspace{3pt}

\noindent then\vspace{3pt}

the unit morphism $\theta_{\coprod_{i\in I}\mathbb{D}(-,D_i)}$ is a monomorphism in $\mathbb{C}=Set^{\mathbb{D}^{op}}$,

for each family $(D_i)_{i\in I}$ of objects of $\mathbb{D}$.
\end{theorem}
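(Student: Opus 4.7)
The plan is to reduce the coproduct case to the representable case by means of Lemma \ref{lemma:unit of coproduct = coproduct of units}, and then to extract monomorphicity at representables from the faithfulness hypothesis via Lemma \ref{lemma:ob(N) generating set for C}(b). First, since $G$ preserves coproducts by hypothesis, Lemma \ref{lemma:unit of coproduct = coproduct of units} applies to the family $(\mathbb{D}(-,D_i))_{i\in I}$ and yields
\[
\theta_{\coprod_{i\in I}\mathbb{D}(-,D_i)} \;=\; \coprod_{i\in I}\theta_{\mathbb{D}(-,D_i)}.
\]
So it suffices to check two things: (i) each component $\theta_{\mathbb{D}(-,D_i)}$ is a monomorphism in $Set^{\mathbb{D}^{op}}$, and (ii) coproducts of monomorphisms in $Set^{\mathbb{D}^{op}}$ are again monomorphisms.

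For (i), let $\mathbb{N}$ be the full subcategory of $\mathbb{C}=Set^{\mathbb{D}^{op}}$ spanned by the representables $\mathbb{D}(-,D)$. As recalled just before the theorem, $ob(\mathbb{N})$ is a generating set for $\mathbb{C}$, so the hypotheses of Lemma \ref{lemma:ob(N) generating set for C}(b) are in force. The Yoneda embedding $y$ is full and faithful and factors as an equivalence $\mathbb{D}\simeq\mathbb{N}$ followed by the full inclusion $\mathbb{N}\hookrightarrow\mathbb{C}$; hence the faithfulness of $F\circ y$ is equivalent to the faithfulness of $F|_{\mathbb{N}}$. Applying Lemma \ref{lemma:ob(N) generating set for C}(b) gives that $\theta_{\mathbb{D}(-,D_i)}$ is a monomorphism for every $i\in I$.

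For (ii), colimits (in particular coproducts) in $Set^{\mathbb{D}^{op}}$ are computed pointwise, so the statement reduces, at each object of $\mathbb{D}$, to the observation that a coproduct (disjoint union) of injective functions in $Set$ is injective. Combining (i) and (ii) with the displayed equality from Lemma \ref{lemma:unit of coproduct = coproduct of units} gives the desired conclusion.

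There is no serious conceptual obstacle here; the result is essentially a packaging of the previous two lemmas. The only step requiring a small amount of care is the translation between the hypothesis that $F\circ y$ is faithful (a condition phrased on the small category $\mathbb{D}$) and the hypothesis of Lemma \ref{lemma:ob(N) generating set for C}(b) that $F|_{\mathbb{N}}$ is faithful, which is what allows the generating-set argument to be invoked inside the presheaf category $\mathbb{C}$.
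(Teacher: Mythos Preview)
Your proof is correct and follows essentially the same approach as the paper's: both invoke Lemma~\ref{lemma:unit of coproduct = coproduct of units} to identify $\theta_{\coprod_i\mathbb{D}(-,D_i)}$ with $\coprod_i\theta_{\mathbb{D}(-,D_i)}$, then use Lemma~\ref{lemma:ob(N) generating set for C}(b) to deduce that each $\theta_{\mathbb{D}(-,D_i)}$ is monic from the faithfulness of $F\circ y$, and finally observe that coproducts of monomorphisms in $Set^{\mathbb{D}^{op}}$ are monomorphisms because monomorphisms there are componentwise injections. Your version is slightly more explicit about the passage from faithfulness of $F\circ y$ to faithfulness of $F|_{\mathbb{N}}$, but the argument is the same.
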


\begin{proof}
Since $G$ preserves coproducts, $\theta_{\coprod_{i\in I}T_i}=\coprod_{i\in I}\theta_{T_i}$ , for each family $(T_i)_{i\in I}$ of objects of $\mathbb{C}$ (cf.\ Lemma \ref{lemma:unit of coproduct = coproduct of units}). Hence, $\theta_{\coprod_{i\in I}\mathbb{D}(-,D_i)}$ is a monomorphism in $\mathbb{C}=Set^{\mathbb{D}^{op}}$, for each family $(D_i)_{i\in I}$ of objects of $\mathbb{D}$, by Lemma \ref{lemma:ob(N) generating set for C}$(b)$ and the fact that a monomorphism in $Set^{\mathbb{D}^{op}}$ is a componentwise injection.
\end{proof}

The previous Theorem \ref{theorem:coproduct of unit morphisms} will be helpful in section \ref{sec:left Kan extensions}. The following Corollary \ref{corollary:ob(N) generating set for C and edm} of Theorem \ref{theorem:ob(N) generating set for C} is needed in section \ref{sec:right Kan extensions}.

\begin{theorem}\label{theorem:ob(N) generating set for C}
Consider any adjunction $G\vdash F:\mathbb{C}\rightarrow \mathbb{X}$ with unit $\theta:1_\mathbb{C}\rightarrow GF$, where $\mathbb{C}=Set^{\mathbb{D}^{op}}$ is a category of presheaves:\vspace{3pt}

\noindent if\vspace{3pt}

$\bullet$ $F\circ y$ is faithful and injective on objects

(where $y:\mathbb{D}\rightarrow Set^{\mathbb{D}^{op}}$ is the Yoneda embedding),

$\bullet$ the coproduct injections in $\mathbb{X}$ are monomorphisms,

$\bullet$ the equality $\iota_j\circ f=\iota_{j'}\circ f'$ implies $\iota_j=\iota_{j'}$,

 for every two coproduct injections

 $\iota_j:X_j\rightarrow\coprod_{i\in I}X_i$ and $\iota_{j'}:X_{j'}\rightarrow\coprod_{i\in I}X_i$ in $\mathbb{X}$,

 and any pair of morphisms $f,f'$ with common domain,\vspace{3pt}

\noindent then\vspace{3pt}

the unit morphism $\theta_{\coprod_{i\in I}\mathbb{D}(-,D_i)}$ is a monomorphism in $\mathbb{C}=Set^{\mathbb{D}^{op}}$,

for each family $(D_i)_{i\in I}$ of objects of $\mathbb{D}$.\end{theorem}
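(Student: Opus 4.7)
The plan is to apply Lemma \ref{lemma:ob(N) generating set for C}(a) with the generating set $\mathbb{N}$ chosen as the full subcategory of representables $y(D)=\mathbb{D}(-,D)$ for $D\in\mathbb{D}$, recalling that these form a generating set in $Set^{\mathbb{D}^{op}}$ (cf.\ the remark immediately preceding Theorem \ref{theorem:coproduct of unit morphisms}). Setting $C:=\coprod_{i\in I}y(D_i)$, it then suffices to show that, for every $N\in\mathbb{D}$, the restriction
$$F_{y(N),C}:\mathbb{C}(y(N),C)\longrightarrow \mathbb{X}(Fy(N),FC)$$
is injective.

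To describe its source, I would use that colimits in $Set^{\mathbb{D}^{op}}$ are computed pointwise, so Yoneda gives $\mathbb{C}(y(N),C)\cong C(N)=\coprod_{i\in I}\mathbb{D}(N,D_i)$; an element is therefore a pair $(j,h)$ with $j\in I$ and $h\in\mathbb{D}(N,D_j)$, corresponding to the composite $\iota_j^\mathbb{C}\circ y(h):y(N)\to y(D_j)\to C$, where $\iota_j^\mathbb{C}$ is the coproduct injection in $\mathbb{C}$. Since $F$ is a left adjoint it preserves coproducts, so $F(\iota_j^\mathbb{C})=\iota_j^\mathbb{X}$, the coproduct injection in $\mathbb{X}$ for $FC=\coprod_{i\in I}Fy(D_i)$.

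The core step is then to assume that two elements $(j,h)$ and $(j',h')$ have the same image under $F$, i.e.\ $\iota_j^\mathbb{X}\circ Fy(h)=\iota_{j'}^\mathbb{X}\circ Fy(h')$, and to deduce $(j,h)=(j',h')$. The third hypothesis yields $\iota_j^\mathbb{X}=\iota_{j'}^\mathbb{X}$; in particular their domains coincide, $Fy(D_j)=Fy(D_{j'})$, and the injectivity of $F\circ y$ on objects forces $D_j=D_{j'}$; viewing the coproduct as indexed by $I$, this forces $j=j'$. With $j=j'$ in hand, the second hypothesis guarantees that $\iota_j^\mathbb{X}$ is a monomorphism, so the equation becomes $Fy(h)=Fy(h')$, and faithfulness of $F\circ y$ concludes $h=h'$. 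Thus $F_{y(N),C}$ is injective for each representable $y(N)$, and Lemma \ref{lemma:ob(N) generating set for C}(a) yields that $\theta_C$ is a monomorphism, as required.

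The main delicacy, as anticipated, is the passage from $\iota_j^\mathbb{X}=\iota_{j'}^\mathbb{X}$ as morphisms in $\mathbb{X}$ to the equality of indices $j=j'$: this is precisely where the injectivity-on-objects of $F\circ y$ pulls its weight, together with the implicit convention that distinct indices in an indexed coproduct give distinct injections. All the remaining steps are straightforward bookkeeping, so I expect the write-up to be brief once the three hypotheses are lined up as above.
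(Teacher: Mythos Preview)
Your proposal is correct and follows essentially the same route as the paper's proof: reduce to Lemma \ref{lemma:ob(N) generating set for C}(a) with $\mathbb{N}$ the representables, factor a morphism $y(N)\to\coprod_i y(D_i)$ through a coproduct injection via Yoneda, apply $F$, then use the three hypotheses in exactly the order you describe (third hypothesis to equate the $\mathbb{X}$-injections, injectivity on objects to identify the indices, monicity of injections plus faithfulness of $Fy$ to finish). The ``main delicacy'' you flag---passing from $\iota_j^\mathbb{X}=\iota_{j'}^\mathbb{X}$ to $j=j'$---is handled in the paper in the same way and with the same implicit convention you name.
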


\begin{proof}
  The representable functors $\mathbb{D}(-,D)$ in $Set^{\mathbb{D}^{op}}$ are known to constitute a dense full subcategory of $Set^{\mathbb{D}^{op}}$ (cf.\ \cite[Theorem 1 in \S III.7]{SM:cat}), therefore a generating set for $Set^{\mathbb{D}^{op}}$. By Lemma \ref{lemma:ob(N) generating set for C}$(a)$, we have just to show that, for every family $(D_i)_{i\in I}$ of objects in $\mathbb{D}$, $F_{\mathbb{D}(-,D),\coprod_{i\in I}\mathbb{D}(-,D_i)}:$

 $Set^{\mathbb{D}^{op}}(\mathbb{D}(-,D),\coprod_{i\in I}\mathbb{D}(-,D_i))\rightarrow \mathbb{X}(F(\mathbb{D}(-,D)),F(\coprod_{i\in I}\mathbb{D}(-,D_i)))$ is an injection, for every $D\in\mathbb{D}$.\vspace{3pt}

It is easy to show, using the Yoneda Lemma, that for any $\alpha :\mathbb{D}(-,D)\rightarrow\coprod_{i\in I}\mathbb{D}(-,D_i)$ there exists $\alpha_j:\mathbb{D}(-,D)\rightarrow\mathbb{D}(-,D_j)$, with $j\in I$, such that $\delta_j\circ\alpha_j=\alpha$, where $\delta_j:\mathbb{D}(-,D_j)\rightarrow\coprod_{i\in I}\mathbb{D}(-,D_i)$ is a coproduct injection, $j\in I$ ($\alpha_j$ is determined by $\alpha_D(1_D):D\rightarrow D_j$, i.e., $\alpha_{j,D}(1_D)=\alpha_D(1_D)$).\vspace{3pt}

Let $\alpha' :\mathbb{D}(-,D)\rightarrow\coprod_{i\in I}\mathbb{D}(-,D_i)$ be another morphism in $Set^{\mathbb{D}^{op}}$ such that $F\alpha'=F\alpha$; then, there exists $\alpha'_{j'}:\mathbb{D}(-,D)\rightarrow\mathbb{D}(-,D_{j'})$ such that $\delta_{j'}\circ\alpha'_{j'}=\alpha'$, where $\delta_{j'}:\mathbb{D}(-,D_{j'})\rightarrow\coprod_{i\in I}\mathbb{D}(-,D_i)$ is a coproduct injection ($j'\in I$); hence, $F\delta_j\circ F\alpha_j=F\delta_{j'}\circ F\alpha'_{j'}$. Notice that $\iota_j=F\delta_j$ and $\iota_{j'}=F\delta_{j'}$ are coproduct injections in $\mathbb{X}$ (since $F$ is a left-adjoint). By the hypotheses in the statement, $\iota_j=\iota_{j'}$, which implies $\delta_j=\delta_{j'}$ ($F\delta_j=F\delta_{j'}\Rightarrow F(\mathbb{D}(-,D_j))=F(\mathbb{D}(-,D_{j'}))\Rightarrow\mathbb{D}(-,D_j)=\mathbb{D}(-,D_{j'})$, because $Fy$ is injective on objects) and $F\alpha_j=F\alpha'_{j'}$ ($\iota_j$ is a monomorphism by hypothesis). Then, $\alpha_j=\alpha_{j'}$ because $Fy$ is faithful (it was concluded before that $\alpha_j$ and $\alpha'_{j'}$ are parallel morphisms in the dense subcategory of representable functors). Finally, $\alpha=\delta_j\circ\alpha_j=\delta_{j'}\circ\alpha'_{j'}=\alpha'$.\end{proof}

\begin{corollary}\label{corollary:ob(N) generating set for C and edm}\cite[See footnote in Remark 6.1]{X:concordant&monotone} If both $\mathbb{C}=Set^{\mathbb{D}^{op}}$ and $\mathbb{X}=Set^\mathbb{B}$ are categories of presheaves, and $F\circ y$ is faithful and injective on objects (where $y:\mathbb{D}\rightarrow Set^{\mathbb{D}^{op}}$ is the Yoneda embedding), then the unit morphism $\theta_{\coprod_{i\in I}\mathbb{D}(-,D_i)}$ is a monomorphism in $\mathbb{C}=Set^{\mathbb{D}^{op}}$, for each family $(D_i)_{i\in I}$ of objects of $\mathbb{D}$.
\end{corollary}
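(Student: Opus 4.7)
The plan is to deduce this as a direct application of Theorem \ref{theorem:ob(N) generating set for C}. The first bulleted hypothesis of that theorem, namely that $F\circ y$ is faithful and injective on objects, is exactly what the corollary assumes. So the task reduces to verifying the two remaining structural hypotheses on $\mathbb{X}$, both of which should follow from the fact that $\mathbb{X}=Set^\mathbb{B}$ is a functor category into $Set$, with colimits computed pointwise.

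For the second bullet, I would recall that a coproduct in $Set^\mathbb{B}$ is computed pointwise as a disjoint union, and that in $Set$ the canonical injections into a disjoint union are injective functions. A componentwise monomorphism in $Set^\mathbb{B}$ is a monomorphism, so every coproduct injection in $Set^\mathbb{B}$ is monic.

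For the third bullet, I would work pointwise once more. Given coproduct injections $\iota_j,\iota_{j'}:X_j,X_{j'}\to\coprod_{i\in I}X_i$ in $Set^\mathbb{B}$ and morphisms $f,f'$ with common domain $Y$ satisfying $\iota_j\circ f=\iota_{j'}\circ f'$, I pick some $b\in\mathbb{B}$ and some $y\in Y(b)$: then $\iota_{j,b}(f_b(y))$ and $\iota_{j',b}(f'_b(y))$ are equal inside the disjoint union $\coprod_{i\in I}X_i(b)$, which by the very bookkeeping of the disjoint union forces $j=j'$, hence $\iota_j=\iota_{j'}$. The only snag is the degenerate case where $Y(b)=\emptyset$ for every $b$, i.e.\ $Y$ is the initial presheaf; but in the application of Theorem \ref{theorem:ob(N) generating set for C} the common domain is $F(\mathbb{D}(-,D))$, and the faithfulness of $F\circ y$ prevents this object from being initial (it must carry enough endomorphisms to reflect $\mathbb{D}(D,D)$, which already contains $1_D$).

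With all three bullets of Theorem \ref{theorem:ob(N) generating set for C} secured, the conclusion that $\theta_{\coprod_{i\in I}\mathbb{D}(-,D_i)}$ is a monomorphism in $Set^{\mathbb{D}^{op}}$ follows immediately. I do not expect any real obstacle; the proof is a routine verification, and the only point that deserves a careful word is the initial-presheaf edge case in the third bullet, which is neutralized by the faithfulness assumption already in force.
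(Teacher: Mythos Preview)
Your strategy is exactly the paper's: invoke Theorem~\ref{theorem:ob(N) generating set for C} and check its second and third bullets pointwise in $Set$. You are in fact more careful than the paper in flagging the initial-presheaf edge case for the third bullet; the paper's proof simply asserts that $\iota_{j,B}\circ f_B=\iota_{j',B}\circ f'_B$ forces $\iota_{j,B}=\iota_{j',B}$ without ever noting that this fails when the common domain is empty at $B$. Unfortunately, your patch for that edge case does not work. Faithfulness of $F\circ y$ only tells you that $\mathbb{D}(D,D)$ injects into the endomorphism monoid of $F(\mathbb{D}(-,D))$; since $\mathbb{D}(D,D)$ may well be the singleton $\{1_D\}$, and since the initial presheaf already has exactly one endomorphism, nothing prevents $F(\mathbb{D}(-,D))$ from being initial.

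The gap is not cosmetic: the corollary appears to be false as stated. Take $\mathbb{D}$ discrete on two objects $A,B$, so that $Set^{\mathbb{D}^{op}}\cong Set\times Set$ with $y(A)=(1,\emptyset)$ and $y(B)=(\emptyset,1)$; take $\mathbb{B}=\mathbf{1}$, so $\mathbb{X}=Set$; and take $F=\pi_2\colon Set\times Set\to Set$, which is left adjoint to $G(X)=(1,X)$. Then $F\circ y$ sends $A\mapsto\emptyset$ and $B\mapsto 1$, hence is injective on objects and (vacuously, all hom-sets in $\mathbb{D}$ having at most one element) faithful. Yet for the two-term family $(A,A)$ one has $\coprod_i\mathbb{D}(-,D_i)=(2,\emptyset)$ and the unit $(2,\emptyset)\to GF(2,\emptyset)=(1,\emptyset)$ is not monic in the first coordinate. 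So neither your argument nor the paper's can be completed without an additional hypothesis ruling out this degeneracy (for instance, that no $F(\mathbb{D}(-,D))$ is initial in $\mathbb{X}$, which does hold in the paper's intended application to $F=Set^K$ once $K(ob\,\mathbb{B})$ cogenerates).
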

\begin{proof}
First, it is trivial that the counit injections are monomorphisms in $Set^\mathbb{B}$.\vspace{3pt}

According to Theorem \ref{theorem:ob(N) generating set for C}, if $\iota_j\circ f=\iota_{j'}\circ f'$ implies that $\iota_j=\iota_{j'}$ in $Set^\mathbb{B}$ (where $\iota_j:X_j\rightarrow\coprod_{i\in I}X_i$ and $\iota_{j'}:X_{j'}\rightarrow\coprod_{i\in I}X_i$ are coproduct injections in $Set^\mathbb{B}$, for any family $(X_i)_{i\in I}$ of objects in $Set^\mathbb{B}$) then the statement would have been proved:

 in fact, for every $B\in \mathbb{B}$, $\iota_{j,B}\circ f_B=\iota_{j',B}\circ f'_B$ implies that $X_j(B)=X_{j'}(B)$ since $(\coprod_{i\in I}X_i)(B)=\coprod_{i\in I}X_i(B)$ and $\iota_{j,B}$, $\iota_{j',B}$ are inclusions in $Set$; therefore, $\iota_{j,B}=\iota_{j',B}$ for every $B\in\mathbb{B}$, that is, $\iota_j=\iota_{j'}$.
\end{proof}

\subsection{Uplifting factorization systems to functor categories}\label{subsec:Uplifting factorization systems to functor categories}

The last result in this section \ref{sec-General Results} allows us to start with a factorization of the morphisms in a category $\mathbb{S}$, and then raise it to the factorization of the natural transformations in the functor category $\mathbb{S}^\mathbb{I}$.

\begin{lemma}\label{lemma:uplifting (pre)factsys}
Let $\mathcal{S}$ be a class of morphisms in the category $\mathbb{S}$. Define $\mathcal{S}^\mathbb{I}$ as the class of all morphisms in the functor category $\mathbb{S}^\mathbb{I}$ whose components are in $\mathcal{S}$ ($\alpha=(\alpha_i)_{i\in \mathbb{I}}\in\mathcal{S}^\mathbb{I}$ if and only if $\alpha_i$ is in $\mathcal{S}$ for all $i\in \mathbb{I}$).

$(a)$ If $(\mathcal{E},\mathcal{M})$ is a prefactorization system on $\mathbb{S}$, then there is a prefactorization system $(\mathcal{F},\mathcal{N})$ on $\mathbb{S}^\mathbb{I}$ such that $\mathcal{E}^\mathbb{I}\subseteq\mathcal{F}$ and $\mathcal{M}^\mathbb{I}\subseteq\mathcal{N}$.

$(b)$ If $(\mathcal{E},\mathcal{M})$ is a factorization system on $\mathbb{S}$, then $(\mathcal{E}^\mathbb{I},\mathcal{M}^\mathbb{I})$ is a factorization system on $\mathbb{S}^\mathbb{I}$.
\end{lemma}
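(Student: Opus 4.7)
For part $(a)$, the plan is to define $\mathcal{N}:=(\mathcal{E}^\mathbb{I})^\perp$, the class of natural transformations right-orthogonal in $\mathbb{S}^\mathbb{I}$ to every member of $\mathcal{E}^\mathbb{I}$, and then set $\mathcal{F}:={}^\perp\mathcal{N}$. By the standard orthogonality machinery recalled in \cite[\S 2.1]{CJKP:stab}, the pair $(\mathcal{F},\mathcal{N})$ is automatically a prefactorization system on $\mathbb{S}^\mathbb{I}$, and the inclusion $\mathcal{E}^\mathbb{I}\subseteq{}^\perp((\mathcal{E}^\mathbb{I})^\perp)=\mathcal{F}$ is formal. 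The real content is therefore the inclusion $\mathcal{M}^\mathbb{I}\subseteq\mathcal{N}$: one has to check that any componentwise-$\mathcal{M}$ transformation is right-orthogonal in $\mathbb{S}^\mathbb{I}$ to any componentwise-$\mathcal{E}$ transformation.

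To establish this, I would take $\alpha=(\alpha_i)\in\mathcal{E}^\mathbb{I}$, $\beta=(\beta_i)\in\mathcal{M}^\mathbb{I}$, and a commutative square $(u,v)\colon\alpha\Rightarrow\beta$ in $\mathbb{S}^\mathbb{I}$, and apply the orthogonality in $\mathbb{S}$ componentwise to produce unique diagonals $d_i\colon G(i)\to H(i)$ satisfying $d_i\alpha_i=u_i$ and $\beta_i d_i=v_i$. The key step is the naturality of $(d_i)_{i\in\mathbb{I}}$: for each $f\colon i\to j$, both $H(f)\circ d_i$ and $d_j\circ G(f)$ serve as diagonal fill-ins for the composite square from $F(i)$ to $K(j)$ with top $u_j\circ F(f)$ and bottom $v_j\circ G(f)$ (the two naturality squares of $u$ and $v$ provide exactly what is needed to verify commutativity), and uniqueness of the fill-in for the pair $(\alpha_i,\beta_j)$ forces them to coincide. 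Uniqueness of $d=(d_i)$ as a morphism of $\mathbb{S}^\mathbb{I}$ then follows from uniqueness in each component.

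For part $(b)$, I would start from componentwise $(\mathcal{E},\mathcal{M})$-factorizations $\alpha_i=m_i\circ e_i$ with intermediate object $P(i)$, and upgrade the assignment $i\mapsto P(i)$ to a functor $P\colon\mathbb{I}\to\mathbb{S}$ in such a way that both $e=(e_i)$ and $m=(m_i)$ become natural transformations. For each $f\colon i\to j$, the equality $m_j\circ e_j\circ F(f)=G(f)\circ m_i\circ e_i$ presents a commutative square with $e_i\in\mathcal{E}$ on the left and $m_j\in\mathcal{M}$ on the right; orthogonality in $\mathbb{S}$ then supplies a unique $P(f)\colon P(i)\to P(j)$ with $P(f)\circ e_i=e_j\circ F(f)$ and $m_j\circ P(f)=G(f)\circ m_i$, which are precisely the naturality squares demanded of $e$ and $m$. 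Functoriality, $P(1_i)=1_{P(i)}$ and $P(gf)=P(g)\circ P(f)$, would likewise be obtained by uniqueness of diagonal fill-ins for the appropriate squares. Combined with part $(a)$, this would establish that $(\mathcal{E}^\mathbb{I},\mathcal{M}^\mathbb{I})$ is a factorization system on $\mathbb{S}^\mathbb{I}$.

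The main obstacle is the familiar one of promoting componentwise diagonal fill-ins to a natural transformation via uniqueness; once that is in place, everything else reduces to routine bookkeeping with commutative squares, and no extra hypothesis on the indexing category $\mathbb{I}$ or on $\mathbb{S}$ is needed.
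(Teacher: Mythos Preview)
Your proposal is correct and follows essentially the same approach as the paper: in (a) you establish the orthogonality $\mathcal{E}^\mathbb{I}\perp\mathcal{M}^\mathbb{I}$ by taking componentwise diagonal fill-ins and proving their naturality via uniqueness of the fill-in for the pair $(\alpha_i,\beta_j)$, and in (b) you obtain the factorization functorially by the same orthogonality argument. The only cosmetic difference is that you spell out the definition of $(\mathcal{F},\mathcal{N})$ explicitly via the orthogonality closure, whereas the paper leaves this implicit and refers to \cite[\S 2.8]{CJKP:stab} for the characterization of factorization systems.
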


\begin{proof} $(a)$ It has to be proved that, for any commutative square $v\circ e=m\circ u$ in $\mathbb{S}^\mathbb{I}$, with $e:A\rightarrow B\in\mathcal{E}^\mathbb{I}$ and $m:C\rightarrow D\in\mathcal{M}^\mathbb{I}$, there is one and only one morphism $w:B\rightarrow C$ such that $w\circ e=u$ and $m\circ w=v$.

If such a morphism $w$ exists then it must be the unique $w=(w_i)_{i\in\mathbb{I}}$ such that $w_i\circ e_i=u_i$ and $m_i\circ w_i=v_i$, for every $i\in \mathbb{I}$. The question is if $(w_i)_{i\in \mathbb{I}}$ is a natural transformation.

Let $f:i\rightarrow j$ be a morphism in $\mathbb{I}$. It has to be shown that $Cf\circ w_i=w_j\circ Bf$, or equivalently (since $e_i\downarrow m_j$) $Cf\circ w_i\circ e_i=Cf\circ u_i=w_j\circ Bf\circ e_i$ and $m_j\circ Cf\circ w_i=v_j\circ Bf=m_j\circ w_j\circ Bf$:

\noindent $Cf\circ (w_i\circ e_i)=Cf\circ u_i$, $w_j\circ (Bf\circ e_i)=(w_j\circ e_j)\circ Af=u_j\circ Af=Cf\circ u_i$;

\noindent $(m_j\circ Cf)\circ w_i=Df\circ (m_i\circ w_i)=Df\circ v_i=v_j\circ Bf$, $(m_j\circ w_j)\circ Bf=v_j\circ Bf$.

$(b)$ Being $(\mathcal{E},\mathcal{M})$ a factorization system on $\mathbb{S}$, it is obvious that both $\mathcal{E}^\mathbb{I}$ and $\mathcal{M}^\mathbb{I}$ contain the identities and are closed under composition with isomorphisms. By $(a)$, in order that $(\mathcal{E}^\mathbb{I},\mathcal{M}^\mathbb{I})$ is a factorization system on $\mathbb{S}^\mathbb{I}$, it remains to show that each morphism $\alpha :A\rightarrow B$ in $\mathbb{S}^\mathbb{I}$ factorises as $\alpha =m\circ e$, with $m\in\mathcal{M}^\mathbb{I}$ and $e\in\mathcal{E}^\mathbb{I}$ (cf.\ \cite[\S 2.8]{CJKP:stab}):

\begin{picture}(370,60)

\put(-3,40){$A(i)$}\put(25,43){\vector(1,0){60}}
\put(52,49){$e_i$}
\put(95,40){$C(i)$}\put(130,43){\vector(1,0){60}}
\put(152,49){$m_i$}\put(200,40){$B(i)$}\put(250,40){$m_i\circ e_i=\alpha_i$}

\put(12,33){\vector(0,-1){20}}\put(-8,20){$Af$}
\put(85,20){$Cf$}\put(105,33){\vector(0,-1){20}}
\put(195,20){$Bf$}\put(215,33){\vector(0,-1){20}}

\put(-3,0){$A(j)$}\put(25,3){\vector(1,0){60}}
\put(52,9){$e_j$}
\put(95,0){$C(j)$}\put(130,3){\vector(1,0){60}}
\put(152,9){$m_j$}\put(200,0){$B(j)$}\put(250,0){$m_j\circ e_j=\alpha_j$,}

\end{picture}\vspace{5pt}

\noindent where $C:\mathbb{I}\rightarrow\mathbb{S}$ being a functor follows from $(\mathcal{E},\mathcal{M})$ being a (pre)factorization system.\end{proof}

\section{The case of left Kan extensions for presheaves}\label{sec:left Kan extensions}

\begin{theorem}\label{theorem:leftKanpresheavesverywellbehaved} Consider a functor $K:\mathbb{B}\rightarrow\mathbb{A}$, such that $\mathbb{B}$ is a small category and $\mathbb{A}$ is a category with small hom-sets. Then, there is an adjunction $Set^K\vdash Lan_K:Set^\mathbb{B}\rightarrow Set^\mathbb{A}$ given by left Kan extensions.

If the left-adjoint $Lan_K$ preserves pullbacks and $Lan_K\circ y$ is faithful (where $y:\mathbb{B}^{op}\rightarrow Set^\mathbb{B}$ is the Yoneda embedding), then there is an induced reflection with stable units $H\vdash I:Set^\mathbb{B}\rightarrow Mono(Lan_K)$ and a monotone-light factorization $(\mathcal{E}'_I,\mathcal{M}^*_I)$ on $Set^\mathbb{B}$.
\end{theorem}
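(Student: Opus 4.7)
The plan is to chain Examples~\ref{example:regular cats} and Theorem~\ref{theorem:stable units+enough edm}, with Theorem~\ref{theorem:coproduct of unit morphisms} supplying the key fact that coproducts of representables land in $Mono(Lan_K)$. The adjunction to feed into Section~\ref{sec-General Results} is $G\vdash F:=Set^K\vdash Lan_K$, with $\mathbb{C}=Set^{\mathbb{B}}$ and $\mathbb{X}=Set^{\mathbb{A}}$.

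For the stable-unit half, I note that $\mathbb{C}=Set^{\mathbb{B}}$ is a Grothendieck topos, hence in particular (Barr) exact and cocomplete, and that $Lan_K$ preserves pullbacks by hypothesis. Examples~\ref{example:regular cats} then delivers at once the reflection $H\vdash I:Set^{\mathbb{B}}\rightarrow Mono(Lan_K)$ with stable units, with unit componentwise a regular epimorphism coming from the (regular epi, mono) factorization system on $Set^{\mathbb{B}}$.

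For the monotone-light half, Theorem~\ref{theorem:stable units+enough edm} requires, for every $C\in Set^{\mathbb{B}}$, an effective descent morphism into $C$ with domain in $Mono(Lan_K)$. Take $\mathbb{N}$ to be the full subcategory of representables, which is dense (and hence generating) in $Set^{\mathbb{B}}$ by \cite[\S III.7]{SM:cat}. The right adjoint $G=Set^K$ preserves all colimits because they are computed pointwise in functor categories into $Set$, and $Lan_K\circ y$ is faithful by hypothesis; Theorem~\ref{theorem:coproduct of unit morphisms} (applied with $\mathbb{D}=\mathbb{B}^{op}$, so that its Yoneda convention matches ours) then gives that every coproduct $\coprod_{i\in I}\mathbb{B}(B_i,-)$ of representables belongs to $Mono(Lan_K)$.

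To produce the required effective descent morphism, I invoke the canonical coend presentation: each $C\in Set^{\mathbb{B}}$ is the codomain of the canonical regular epimorphism $p:\coprod_{(B,x)}\mathbb{B}(B,-)\rightarrow C$ (indexed by pairs $(B,x)$ with $B\in\mathbb{B}$ and $x\in C(B)$, the component on $(B,x)$ being the Yoneda transformation determined by $x$). By the previous paragraph, the domain of $p$ lies in $Mono(Lan_K)$; and since $Set^{\mathbb{B}}$ is Barr exact, regular epimorphisms there coincide with effective descent morphisms (Examples~\ref{example:Barr exact cats}, \cite{JST:edm}). Theorem~\ref{theorem:stable units+enough edm} therefore produces the monotone-light factorization $(\mathcal{E}'_I,\mathcal{M}^*_I)$ on $Set^{\mathbb{B}}$. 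The only mild subtlety is matching the variance of the Yoneda embedding between Theorem~\ref{theorem:coproduct of unit morphisms} and the present statement; once that is done, the entire argument is a direct chaining of prior results.
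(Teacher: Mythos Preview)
Your proof is correct and follows essentially the same route as the paper's own proof: the paper invokes Examples~\ref{example:Barr exact cats} directly (which packages Examples~\ref{example:regular cats} together with Theorem~\ref{theorem:stable units+enough edm}), while you unpack these two ingredients explicitly, and both proofs rely on Theorem~\ref{theorem:coproduct of unit morphisms} with the observation that $Set^K$ preserves colimits to place coproducts of representables inside $Mono(Lan_K)$. Your remark about matching the Yoneda variance ($\mathbb{D}=\mathbb{B}^{op}$) is a reasonable bookkeeping note that the paper leaves implicit.
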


\begin{proof}
The statement is a special case of Examples \ref{example:Barr exact cats}, with $\mathbb{C}=Set^\mathbb{B}$, $\mathbb{X}=Set^\mathbb{A}$, $(\mathcal{E},\mathcal{M})=(Epis, Monos)$ is the lifting given by Lemma \ref{lemma:uplifting (pre)factsys}$(b)$ of the factorization system $(Surjections,Injections)$ on $Set$.

In fact, the representable functors $\mathbb{B}(B,-)$ in $Set^\mathbb{B}$ constitute a dense subcategory of $Set^\mathbb{B}$, whose unit morphisms $\theta_{\mathbb{B}(B,-)}:\mathbb{B}(B,-)\rightarrow Lan_K(\mathbb{B}(B,-))\circ K$ are by assumption monomorphisms ($\Leftrightarrow Lan_K\circ y$ faithful) and their coproducts are also in $Mono(Lan_K)$ by Theorem \ref{theorem:coproduct of unit morphisms} (the right adjoint $Set^K$ preserves colimits).\end{proof}

\begin{remark}\label{remark:LanK preserves pullbacks too strong}
In Theorem \ref{theorem:leftKanpresheavesverywellbehaved} just above, instead of asking that $Lan_K$ preserves all pullbacks, it could be only demanded the weaker condition that $Set^K\circ Lan_K$ preserves pullbacks (in fact, only the ones given in Theorem \ref{theorem:stable units idempotent monad}$(a)$).
\end{remark}

Notice that, if $K=\ !:\mathbb{J}\rightarrow\mathbf{1}$ is the functor whose codomain is the category $\mathbf{1}$ (one object and no non-identity morphisms) and the domain $\mathbb{J}$ is small, then the adjunction $Set^K\vdash Lan_K:Set^\mathbb{J}\rightarrow Set$ is $\Delta\vdash Colim$, that is, the right adjoint is the diagonal functor and the left adjoint takes colimits.

Recall, as well, that a category $\mathbb{J}=\coprod_{i\in I}\mathbb{J}_i$ is pseudo-filtered if its connected components $\mathbb{J}_i$ are filtered (cf.\ Exercise 2 in \cite[\S IX.2]{SM:cat}).

\begin{lemma}\label{lemma:colim functor}
Consider the adjunction $\Delta\vdash Colim: Set^\mathbb{J}\rightarrow Set$, with unit $\theta$, from which derives the full reflective subcategory $Mono(Colim)$ as in Theorem \ref{theorem:idempotent monad}, using the factorization system $(Epis,Monos)=(Surjections^\mathbb{J},Injections^\mathbb{J})$ (cf.\ Lemma \ref{lemma:uplifting (pre)factsys}$(b)$). Let the category $\mathbb{J}$ be pseudo-filtered (cf.\ last paragraph just before this Lemma \ref{lemma:colim functor}). Then, the two following statements are valid.

\noindent $(a)$ $M\in Mono(Colim)$ if and only if $Mu$ is injective, for every $u$ in $\mathbb{J}$.

\noindent $(b)$ The functor $Colim: Set^\mathbb{J}\rightarrow Set$ preserves pullbacks.
\end{lemma}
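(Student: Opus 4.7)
The plan rests on the concrete description of pseudo-filtered colimits in $Set$. Writing $\mathbb{J}=\coprod_i \mathbb{J}_i$ with each $\mathbb{J}_i$ filtered, a functor $M:\mathbb{J}\to Set$ satisfies $Colim\,M = \coprod_i Colim(M|_{\mathbb{J}_i})$, where each filtered piece is the quotient of $\bigsqcup_{j\in\mathbb{J}_i} M(j)$ by the relation declaring $(j,x)\sim(k,y)$ precisely when there exist arrows $u:j\to l$ and $v:k\to l$ with $M(u)(x)=M(v)(y)$. Pseudo-filteredness is exactly what makes this naive relation transitive, and the component $\theta_{M,j}$ sends $x$ to its class.

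For part $(a)$, monomorphisms in $Set^\mathbb{J}$ are componentwise injections (this is the lifted factorization system produced by Lemma \ref{lemma:uplifting (pre)factsys}$(b)$), so $M\in Mono(Colim)$ amounts to each $\theta_{M,j}$ being injective. For the ($\Rightarrow$) direction, naturality of $\theta$ into the constant diagram $\Delta(Colim\,M)$ yields $\theta_{M,l}\circ M(u)=\theta_{M,j}$ for every $u:j\to l$; hence $M(u)(x)=M(u)(y)$ forces $\theta_{M,j}(x)=\theta_{M,j}(y)$ and then $x=y$. For the ($\Leftarrow$) direction, if $\theta_{M,j}(x)=\theta_{M,j}(y)$, the explicit description above produces some $u:j\to l$ with $M(u)(x)=M(u)(y)$, and the standing hypothesis yields $x=y$. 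Notice that pseudo-filteredness is used only in this second direction, to ensure a single $u$ (rather than a zig-zag) suffices as a witness.

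For part $(b)$, I would reduce to the filtered case and then invoke the standard commutation of filtered colimits with finite limits in $Set$. Pullbacks in $Set^\mathbb{J}$ are computed pointwise, pullbacks in $Set$ distribute over disjoint unions by extensivity, and $Colim$ factors as filtered colimits on each connected component followed by a coproduct; hence it suffices to handle $\mathbb{J}$ filtered. In the filtered case, given a pullback square with pointwise pullback $P=A\times_C B$, any element of $Colim\,A\times_{Colim\,C}Colim\,B$ is represented by $(a,b)\in A(j)\times B(j)$ at a common index (using filteredness), and then the equality of classes in $Colim\,C$ produces a further $u:j\to l$ with $C(u)(f_j(a))=C(u)(g_j(b))$; the pair $(A(u)(a),B(u)(b))$ lies in $P(l)$ and covers the given element. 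Injectivity of the comparison map $Colim\,P\to Colim\,A\times_{Colim\,C}Colim\,B$ is handled symmetrically by lifting two representatives to a common index and then coequalizing, once more via filteredness.

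The only genuine obstacle is bookkeeping: invoking pseudo-filteredness at the right moments (specifically to guarantee transitivity in the description of $Colim\,M$ for part $(a)$, and to enable the lifting-and-coequalizing arguments after the extensivity reduction in part $(b)$). Neither point is subtle, and both are absorbed cleanly by the decomposition $\mathbb{J}=\coprod_i\mathbb{J}_i$ together with the extensivity of $Set$.
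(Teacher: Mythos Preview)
Your argument is correct and essentially matches the paper's. For part~$(a)$ the paper argues identically in both directions; the one place where your phrasing is slightly loose is in $(\Leftarrow)$, where the equivalence relation first yields two morphisms $u,v:j\to l$ with $M(u)(x)=M(v)(y)$, and it is the coequalizing clause of (pseudo-)filteredness---not merely the absence of zig-zags---that produces a common $w$ with $wu=wv$ and hence a single witness; you clearly have this in mind, but it is worth saying explicitly. For part~$(b)$ the paper simply cites the Mac~Lane exercise that pseudo-filtered colimits commute with pullbacks in $Set$, whereas you actually sketch the proof via the decomposition $\mathbb{J}=\coprod_i\mathbb{J}_i$, extensivity of $Set$, and an element chase in the filtered case; this is the same statement, just unpacked.
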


\begin{proof}
$(a)$

\noindent $(\Rightarrow)$ $M\in Mono(Colim)\Leftrightarrow \forall_{j\in\mathbb{J}}\ \theta_{M,j}:M(j)\rightarrow Colim(M)$ is an injection, where $\theta_{M,j}$ is the $j$-component of the unit morphism of $M\in Set^\mathbb{J}$ (the colimit cocone of $M:\mathbb{J}\rightarrow Set$), then

$\forall_{u:j\rightarrow j'} \forall_{x,x'\in M(j)} Mu(x)=Mu(x')\Rightarrow x=x'$.\vspace{3pt}

\noindent In fact, $\theta_{M,j}(x)=\theta_{M,j'}\circ Mu(x)=\theta_{M,j'}\circ Mu(x'
)=\theta_{M,j}(x')$, and so $x=x'$ because $\theta_{M,j}$ is an injection:

$$\begin{picture}(55,55)(0,0)

\put(35,53){$Colim(M)$}
\put(8,35){$\theta_{M,j}$}\put(80,35){$\theta_{M,j'}$}

\put(10,12){\vector(1,1){35}}\put(100,12){\vector(-1,1){35}}

\put(0,0){$M(j)$}\put (100,0){$M(j')$\hspace{10pt}.}

\put(30,2){\vector(1,0){65}}\put(45,6){$Mu$}

\end{picture}$$


\noindent $(\Leftarrow )$ $Colim(M)=\coprod_{j\in \mathbb{J}}M(j)/E$, where $E$ is the equivalence relation which is about to be described. Let $x\in M(j)$ and $x'\in M(j')$, then $xEx'$ if and only if there exists $u:j\rightarrow k$ and $u':j'\rightarrow k$ such that $Mu(x)=Mu'(x')$ (cf.\ the proof of Theorem 1 in \cite[\S IX.2]{SM:cat}, where this result is stated; the generalization to $\mathbb{J}=\coprod_{i\in I}\mathbb{J}_i$ pseudo-filtered is obvious, since the colimits are obtained as the disjoint union of the colimits of the filtered connected components $\mathbb{J}_i$).

Let $x,x'\in M(j)$ such that $\theta_{M,j}(x)=\theta_{M,j}(x')$, and therefore $xEx'$. Then, as $j=j'$, it follows that
\begin{picture}(105,16)(0,5)

\put (0,4){$\exists j$}\put (50,4){$k$}\put (95,4){$k'$}

\put(15,13){\vector(1,0){30}}\put(25,15){$u$}
\put(15,3){\vector(1,0){30}}\put(25,4){$u'$}
\put(60,8){\vector(1,0){30}}\put(70,9){$w$}

\end{picture}\ \ such that $M(w\circ u)(x)=M(w\circ u')(x')$, which implies by the hypothesis that $x=x'$, since $w\circ u=w\circ u'$ (the existence of such $w:k\rightarrow k'$ is assured by $\mathbb{J}$ being pseudo-filtered: cf.\ Exercise 2 in \cite[\S IX.2]{SM:cat}, where the definition of pseudo-filtered category is displayed).

$(b)$ It is well known that finite limits commute with filtered colimits in $Set$. The statement is just a refinement of this fact (cf.\ Exercise 4 in \cite[\S IX.2]{SM:cat}): pseudo-filtered colimits commute with pullbacks in $Set$; it follows that if $\mathbb{J}$ is pseudo-filtered then $Colim$ preserves pullbacks.\end{proof}

\begin{theorem}\label{theorem:Colimverywellbehaved}

Under the conditions of Lemma \ref{lemma:colim functor}, if every morphism in $\mathbb{J}$ is a monomorphism, then there is a monotone-light factorization system $(\mathcal{E}'_I,\mathcal{M}^*_I)$ on $Set^\mathbb{J}$.

\end{theorem}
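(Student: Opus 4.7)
The plan is to apply Theorem \ref{theorem:stable units+enough edm}, so I need (i) stable units for the derived reflection $I \dashv H : Set^{\mathbb{J}} \to Mono(Colim)$ and (ii) for each $C \in Set^{\mathbb{J}}$ an effective descent morphism $p : M \twoheadrightarrow C$ with $M \in Mono(Colim)$.

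For (i), I would apply Theorem \ref{theorem:stable units idempotent monad}. Condition $(a)$ is immediate: $\Delta$ preserves all limits as a right adjoint, and $Colim$ preserves pullbacks by Lemma \ref{lemma:colim functor}$(b)$ (which already uses pseudo-filteredness of $\mathbb{J}$), hence $\mathcal{F} = \Delta \circ Colim$ preserves pullbacks. Condition $(b)$ follows from the fact that $(\mathcal{E},\mathcal{M}) = (Epis, Monos)$ on $Set^{\mathbb{J}}$ is computed componentwise by Lemma \ref{lemma:uplifting (pre)factsys}$(b)$, pullbacks in $Set^{\mathbb{J}}$ are computed pointwise, and componentwise surjections in $Set^{\mathbb{J}}$ pull back to componentwise surjections because surjections are stable under pullbacks in $Set$; therefore $\mathcal{E} = \mathcal{E}'$ and in particular every $\eta_C \in \mathcal{E}'$.

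For (ii), I exploit the hypothesis that every $u$ in $\mathbb{J}$ is a monomorphism. For each $j \in \mathbb{J}$, the representable $\mathbb{J}(j,-)$ lies in $Mono(Colim)$: by Lemma \ref{lemma:colim functor}$(a)$ one must check that $\mathbb{J}(j,u) : \mathbb{J}(j,k) \to \mathbb{J}(j,k')$ is injective for every $u : k \to k'$, which is exactly the statement that $u$ is monic. Since coproducts in $Set^{\mathbb{J}}$ are taken pointwise and disjoint unions of injections are injections, any coproduct $\coprod_{i} \mathbb{J}(j_i,-)$ again lies in $Mono(Colim)$ by another appeal to Lemma \ref{lemma:colim functor}$(a)$. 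For each $C \in Set^{\mathbb{J}}$, the canonical presentation indexed by the set of elements,
\[
p \,:\, \coprod_{(j,x),\ x \in C(j)} \mathbb{J}(j,-) \longrightarrow C,
\]
obtained via Yoneda is a componentwise surjection (the pair $(1_k,(k,y))$ is sent to $y$ for each $y \in C(k)$). Since $Set^{\mathbb{J}}$ is a Grothendieck topos, hence Barr exact, every componentwise surjection is a regular epimorphism and so an effective descent morphism (cf.\ \cite{JST:edm}), providing the required $p$.

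Combining (i) and (ii) with Theorem \ref{theorem:stable units+enough edm} yields the monotone-light factorization system $(\mathcal{E}'_I, \mathcal{M}^*_I)$ on $Set^{\mathbb{J}}$. I do not expect a genuine obstacle in this argument: the only ingredient that needs attention is the identification of effective descent morphisms in the presheaf category, which is standard, and the verification that representables belong to $Mono(Colim)$ — the step that crucially uses the monicity of all arrows of $\mathbb{J}$ through Lemma \ref{lemma:colim functor}$(a)$.
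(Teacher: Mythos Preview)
Your argument is correct and follows the same route as the paper, just with the intermediate packaging theorem unwrapped: the paper invokes Theorem~\ref{theorem:leftKanpresheavesverywellbehaved} (which already bundles Theorems~\ref{theorem:stable units idempotent monad}, \ref{theorem:stable units+enough edm}, and \ref{theorem:coproduct of unit morphisms} via Examples~\ref{example:Barr exact cats}), whereas you go straight to Theorems~\ref{theorem:stable units idempotent monad} and~\ref{theorem:stable units+enough edm}. The one small deviation is your verification that coproducts of representables lie in $Mono(Colim)$: the paper appeals to Theorem~\ref{theorem:coproduct of unit morphisms} (using that the right adjoint $\Delta$ preserves coproducts), while you re-apply Lemma~\ref{lemma:colim functor}$(a)$ directly and observe that disjoint unions of injections are injections --- a perfectly valid and arguably more transparent shortcut in this special case.
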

\begin{proof}

By Lemma \ref{lemma:colim functor}$(b)$, the left-adjoint $Lan_!=Colim$ preserves pullbacks. According to Theorem \ref{theorem:leftKanpresheavesverywellbehaved}, there exists  the required monotone-light factorization, provided any representable functor $\mathbb{J}(i,-)$ belongs to $Mono(Colim)$. This is the case if, for every $u:j\rightarrow j'$ in $\mathbb{J}$, the image $\mathbb{J}(i,-)u:\mathbb{J}(i,-)(j)\rightarrow \mathbb{J}(i,-)(j')$ is always an injection, by Lemma \ref{lemma:colim functor}$(a)$. It is obviously so, since the functions (compose with $u$) $u_*:\mathbb{J}(i_k,j)\rightarrow\mathbb{J}(i_k,j')$ are injections if $u$ is a monomorphism.\end{proof}

\begin{examples}\label{example:colimit}Any preorder $\mathbb{J}$, such that each of its connected components is a filtered category (also called filtered set or directed set), gives rise to a very-well-behaved induced reflection $Set^\mathbb{J}\rightarrow Mono(Colim)$.

 For instance, consider the very simple special case of Theorem \ref{theorem:Colimverywellbehaved} with $\mathbb{J}=\mathbf{2}$, the ordinal number with two objects $0$, $1$ and only one non-identity morphism $u:0\rightarrow 1$. As $\mathbf{2}$ is a filtered preorder, the reflection $H\vdash I:Set^\mathbf{2}\rightarrow Mono(Colim)$ is \emph{very-well-behaved}, that is, has stable units and a monotone-light factorization $(\mathcal{E}'_I,\mathcal{M}^*_I)$, obtained by simultaneous stabilization and localization of the classes in the reflective factorization system $(\mathcal{E}_I,\mathcal{M}_I)$. It is going to be shown that this particular monotone-light factorization $(\mathcal{E}'_I,\mathcal{M}^*_I)$ is non-trivial, that is, $\mathcal{E}'_I$ is strictly contained in $\mathcal{E}_I$ ($\Leftrightarrow\mathcal{M}^*_I$ strictly contains $\mathcal{M}_I$).

The unit morphism $\theta_M:M\rightarrow\Delta Colim(M)=\Delta M(1)$ consists in two functions $\theta_{M,0}=Mu$ and $\theta_{M,1}=id_{M(1)}$, for all $Mu=M(0)\rightarrow M(1)$. Hence, from the $(Surjections, Injections)$-factorizations $$\theta_{M,0}=\mu_{M,0}\circ\eta_{M,0},\ \theta_{M,1}=1_{M(1)}=\mu_{M,1}\circ\eta_{M,1}=1_{M(1)}\circ 1_{M(1)},$$ \noindent it is easy to conclude that the replete full subcategory $Mono(Colim)$ is determined by all injections (cf.\ Lemma \ref{lemma:colim functor}).
Concluding then that a morphism $\varphi =(\varphi_0,\varphi_1):M\rightarrow N$ belongs to $\mathcal{E}_I$ if and only if $\varphi_1:M(1)\rightarrow N(1)$ is a bijection and $(Nu\circ\varphi_0)(M(0))=Nu(N(0))$ (where $Nu:N(0)\rightarrow N(1)$ and $\varphi_0:M(0)\rightarrow N(0)$):

\noindent indeed, $\varphi =(\varphi_0,\varphi_1)\in\mathcal{E}_I$ if and only if $I\varphi =I(\varphi_0,\varphi_1)=(I(\varphi)_0,I(\varphi)_1)$ is an isomorphism (cf.\ \cite[(3.2) in 3.1]{CJKP:stab}), that is both components are bijections. Noticing that $I(\varphi)_1=\varphi_1$, and that $\varphi_1$ being a bijection implies that $I(\varphi)_0$ is an injection, it remains to characterize the surjection of $I(\varphi)_0$ in terms of the initial data: $I(\varphi)_0$ is a surjection iff $I(\varphi)_0\circ\eta_{M,0}$ is a surjection iff $\eta_{N,0}\circ\varphi_0$ is a surjection iff $\eta_{N,0}\circ\varphi_0(M(0))=I(N)(0)$ iff $Nu\circ\varphi_0(M(0))=Nu(N(0))(\cong I(N)(0))$.

Let $M=N$, $M(0)=\{x,y\}$, $M(1)=\{x\}$ and $\varphi_0(x)=x=\varphi_0(y)$. This particular morphism $\varphi =(\varphi_0,\varphi_1)$ belongs to $\mathcal{E}_I$ but does not belong to its largest subclass closed under pullbacks $\mathcal{E}'_I$. Indeed, take $Qu:\{y\}\rightarrow\{x\}$, and $\psi =(\psi_0,\psi_1):Q\rightarrow N$, such that $\psi_0$ and $\psi_1$ are the inclusions, then the pullback $\psi^*(\varphi)$ of $\varphi$ along $\psi$ does not belong to $\mathcal{E}_I$, since the empty set is the domain of $(\psi^*(\varphi))_0:\emptyset\rightarrow\{y\}$.\end{examples}

\section{The case of right Kan extensions for presheaves}\label{sec:right Kan extensions}

The right Kan extensions adjunction $Ran_K\vdash Set^K :Set^\mathbb{A}\rightarrow Set^\mathbb{B}$ is more interesting for the purpose of obtaining \emph{very-well-behaved} reflections, than the left Kan extensions case of the precedent section \ref{sec:left Kan extensions}.

As $Set^K$ is left-exact ``a priori" (in fact, it preserves both limits and colimits), Corollary \ref{corollary:ob(N) generating set for C and edm} (of Theorem \ref{theorem:ob(N) generating set for C}; cf.\ also Examples \ref{example:Barr exact cats}) states that $H\vdash I:Set^\mathbb{A}\rightarrow Mono(Set^K)$ is \emph{very-well-behaved} if $Set^K\circ y$ is faithful ($\Leftrightarrow \forall_{A\in\mathbb{A}}\ \mathbb{A}(A,-)\in Mono(Set^K)$) and injective on objects, where $y:\mathbb{A}^{op}\rightarrow Set^\mathbb{A}$ is the Yoneda embedding. But, for the case $Ran_K\vdash Set^K :Set^\mathbb{A}\rightarrow Set^\mathbb{B}$, the injectivity on objects of $Set^K\circ y$ is not needed, because coproduts preserve jointly monic morphisms in $Set$ (see Definition \ref{def:copresjointlymonic} and Lemma \ref{lemma:RightKanCoprodUnitsMonic}).

Next Theorem \ref{theorem:RightKanCogeneratingVeryWellBehaved} gives a useful characterization of such a case using the notion of \emph{cogenerating set} (the dual of the notion of a generating set given two paragraphs before Lemma \ref{lemma:ob(N) generating set for C}).

\begin{lemma}\label{lemma:RightKanUnit}
Consider an adjunction $$(\mathbb{S}^K,Ran_K,\theta,\varepsilon):\mathbb{S}^\mathbb{A}\rightarrow \mathbb{S}^\mathbb{B},$$ given by right Kan extensions calculated as pointwise limits. The right Kan extension $Ran_K(SK)$ is calculated for each $A\in\mathbb{A}$ as

$Ran_K(SK)(A)=Lim((A\downarrow K)$\begin{picture}(30,17)(0,0)
\put(3,4){\vector(1,0){22}}\put(10,7){$Q$}
\end{picture}$\mathbb{B}$\begin{picture}(30,17)(0,0)
\put(3,4){\vector(1,0){22}}\put(5,7){$SK$}
\end{picture}$\mathbb{S})=Lim_fSK(B)$,

\noindent $f\in (A\downarrow K)$, where $Q$ is the projection of the comma category (cf.\ \cite[\S X.3]{SM:cat}).\vspace{3pt}

Then, the unit morphism $\theta_S$ is the unique morphism in $\mathbb{S}^\mathbb{A}$ such that $$\lambda^{SK,A}_f\circ\theta_{S,A}=Sf,$$ for every $f\in (A\downarrow K)$, where $\lambda^{SK,A}_f:Ran_K(SK)(A)\rightarrow SK(B)$ is the morphism in the limiting cone of $Ran_K(SK)(A)$ associated with $f:A\rightarrow K(B)$.
\end{lemma}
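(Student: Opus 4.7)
The plan is to identify $\theta_S$ as the adjunct of the identity $1_{SK} : SK \to SK$ under the adjunction bijection $\mathbb{S}^\mathbb{A}(S, Ran_K(SK)) \cong \mathbb{S}^\mathbb{B}(SK, SK)$, and then make this bijection explicit through the pointwise limit formula for $Ran_K$. The key observation is the general fact that, for any $\beta : SK \to T$ in $\mathbb{S}^\mathbb{B}$, its adjunct $\bar\beta : S \to Ran_K T$ has components $\bar\beta_A : S(A) \to Lim_f\, T(B)$ determined, via the universal property of the limit $Ran_K T(A) = Lim_f\, T(B)$, by the equations $\lambda^{T,A}_f \circ \bar\beta_A = \beta_B \circ Sf$ for every $f : A \to KB$.

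First I would verify that the family $(\beta_B \circ Sf)_{f \in (A \downarrow K)}$ really is a cone over the diagram $SK \circ Q : (A \downarrow K) \to \mathbb{S}$ for each $A$. Given a morphism $u : (B,f) \to (B',f')$ in $(A \downarrow K)$, which by definition means $Ku \circ f = f'$, the required compatibility $SK(u) \circ (\beta_B \circ Sf) = \beta_{B'} \circ Sf'$ follows by combining the naturality of $\beta : SK \to T$ at $u$ with the functoriality of $S$ applied to the identity $f' = Ku \circ f$. The universal property of $Lim_f\, T(B)$ then produces a unique $\bar\beta_A$ satisfying $\lambda^{T,A}_f \circ \bar\beta_A = \beta_B \circ Sf$, and its naturality in $A$ is a routine check from the fact that the limiting cones $\lambda^{T,-}$ are themselves natural in $A$.

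Next I would specialise this correspondence to $\beta = 1_{SK}$. By definition, the unit $\theta_S$ of the adjunction $\mathbb{S}^K \dashv Ran_K$ at $S$ is exactly the adjunct of the identity morphism on $\mathbb{S}^K S = SK$. Substituting $\beta_B = 1_{SK(B)}$ collapses $\beta_B \circ Sf$ to $Sf$, yielding precisely the stated characterisation $\lambda^{SK,A}_f \circ \theta_{S,A} = Sf$ together with the asserted uniqueness, both being direct consequences of the universal property of the limit defining $Ran_K(SK)(A)$.

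The only step requiring genuine care is making explicit the translation between the abstract adjoint-functor bijection and the pointwise formula for $Ran_K$; this is standard material (cf.\ \cite[\S X.3]{SM:cat}), ultimately amounting to chasing the naturality square of $\beta$ along with the comma-category equation $Ku \circ f = f'$. I anticipate no real obstacle beyond this bookkeeping, so the argument should be short once the adjoint correspondence is unpacked.
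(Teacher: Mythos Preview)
Your proposal is correct, but it proceeds by a genuinely different route than the paper. You treat the adjunction $\mathbb{S}^K\dashv Ran_K$ as given, recall that the unit $\theta_S$ is by definition the adjunct of $1_{SK}$, and then unpack the adjunction bijection $\mathbb{S}^\mathbb{B}(SK,T)\cong\mathbb{S}^\mathbb{A}(S,Ran_KT)$ through the pointwise limit formula to read off $\lambda^{SK,A}_f\circ\theta_{S,A}=Sf$. The paper instead goes in the reverse direction: it \emph{defines} a candidate $\theta_S$ componentwise by the limit formula, then verifies in three explicit steps that this candidate is natural in $A$, natural in $S$, and satisfies both triangle identities with the known counit $\varepsilon$ (using $\varepsilon_{SK,B}=\lambda^{SK,K(B)}_{1_{K(B)}}$), thereby concluding it is the unit. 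Your approach is shorter and more conceptual, leaning on the standard description of the hom-set bijection from \cite[\S X.3]{SM:cat}; the paper's approach is more self-contained and makes every naturality and triangle identity visible, which is useful since later results (e.g.\ Lemma~\ref{lemma:jointlymonicRightKanUnit}) cite this lemma precisely to justify the explicit form of $\theta$. Either argument suffices.
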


\begin{proof}$(1)$ It will be shown first that $\theta_S: S\rightarrow Ran_K(SK)$ is a morphism in $\mathbb{S}^\mathbb{A}$, that is, $$Ran_K(SK)g\circ\theta_{S,A}=\theta_{S,A'}\circ Sg,$$ for every $g:A\rightarrow A'$ in $\mathbb{A}$:\vspace{5pt}

$\lambda^{SK,A'}_{f'}\circ Ran_K(SK)g\circ\theta_{S,A}=\lambda^{SK,A}_{f'\circ g}\circ\theta_{S,A}=S(f'\circ g)=Sf'\circ Sg=\lambda^{SK,A'}_{f'}\circ\theta_{S,A'}\circ Sg$, for any $f'\in (A'\downarrow K)$; the conclusion follows from the universality of the limiting cone $(\lambda^{SK,A'}_{f'})_{f'\in (A'\downarrow K)}$.\\

$(2)$ Secondly, it is going to be proved that the family $(\theta_S)_{S\in\mathbb{S}^\mathbb{A}}$ is indeed a natural transformation $\theta :1_{\mathbb{S}^\mathbb{A}}\rightarrow Ran_K\circ \mathbb{S}^K$, that is, $$(*)\ (Ran_K\sigma K)_A\circ\theta_{S,A}=\theta_{S',A}\circ\sigma_A,$$ for every morphism $\sigma :S\rightarrow S'$ in $\mathbb{S}^\mathbb{A}$ and every $A\in\mathbb{A}$.

That would be so if $$(**)\ \lambda^{S'K,A}_f\circ (Ran_K\sigma K)_A=\sigma_{K(B)}\circ\lambda^{SK,A}_f,$$ for every $f:A\rightarrow K(B)$, where $\lambda^{SK,A}_f$ and $\lambda^{S'K,A}_f$ are the obvious morphisms in the respective limiting cones of $Ran_K(SK)(A)$ and $Ran_K(S'K)(A)$. Indeed,\vspace{5pt}

$\lambda^{S'K,A}_f\circ\theta_{S',A}\circ\sigma_A=S'f\circ\sigma_A$ (by definition of $\theta_{S',A}$)

$=\sigma_{K(B)}\circ Sf$ (by naturality of $\sigma$)

$=\sigma_{K(B)}\circ\lambda^{SK,A}_f\circ\theta_{S,A}$ (by definition of $\theta_{S,A}$)

$=\lambda^{S'K,A}_f\circ (Ran_K\sigma K)_A\circ\theta_{S,A}$ (by hypothesis $(**)$);

\noindent therefore, by the universality of the limiting cone $(\lambda^{S'K,A}_f)_{f\in (A\downarrow K)}$, and under the hypothesis $(**)$, $\theta$ is a natural transformation.\\

$(2.1)$ We are going to prove $(**)$ first for the case $A=K(B)$ and $f=1_{K(B)}$.

It is known that $\lambda^{SK,K(B)}_{1_{K(B)}}=\varepsilon_{SK,B}$ and $\lambda^{S'K,K(B)}_{1_{K(B)}}=\varepsilon_{S'K,B}$ by the definition of the counit $\varepsilon$ (given in Theorem 1 in \cite[\S X.3]{SM:cat}).

Hence, $(**)$ reduces to $$\varepsilon_{S'K,B}\circ (Ran_K\sigma K)_{K(B)}=\sigma_{K(B)}\circ\varepsilon_{SK,B},$$

\noindent which is true since $\varepsilon_{S'K}\cdot ((Ran_K\sigma K)\circ K)=\sigma K\cdot\varepsilon_{SK}$, being $\varepsilon$ the counit of $\mathbb{S}^K\dashv Ran_K$.\\

$(2.2)$ Now,

$\lambda^{S'K,A}_f\circ (Ran_K\sigma K)_A = \lambda^{S'K,K(B)}_{1_{K(B)}}\circ Ran_K(S'K)f\circ(Ran_K\sigma K)_A=\lambda^{S'K,K(B)}_{1_{K(B)}}\circ (Ran_K\sigma K)_{K(B)}\circ Ran_K(SK)f=^{(2.1)}\sigma_{K(B)}\circ\lambda^{SK,K(B)}_{1_{K(B)}}\circ Ran_K(SK)f=\sigma_{K(B)}\circ\lambda^{SK,A}_f$, as wanted.\\

$(3)$ According to Theorem 2(v) in \cite[\S IV.1]{SM:cat}, in order that, as defined in the statement, $\theta$ is the unit of the adjunction, it has to be checked that
\begin{itemize}
\item[(a)] $(\varepsilon\circ\mathbb{S}^K)\cdot(\mathbb{S}^K\circ\theta)=\mathbb{S}^K$, and
\item[(b)] $(Ran_K\circ\varepsilon)\cdot (\theta\circ Ran_K)=Ran_K$,
\end{itemize}
\noindent since $\mathbb{S}^K$ and $Ran_K$ are functors, $\theta$ has just been proved to be a natural transformation, and $\varepsilon$ is the known counit of $\mathbb{S}^K\dashv Ran_K$.\\

$(a)$ For every $S\in\mathbb{S}^\mathbb{A}$ and every $B\in\mathbb{B}$, $$\varepsilon_{SK,B}\circ\theta_{S,K(B)}=1_{SK(B)}$$ follows immediately from the definition of $\theta_S$, since $\varepsilon_{SK,B}=\lambda^{SK,K(B)}_{1_{K(B)}}$ (cf.\ Theorem 1 in \cite [\S X.3]{SM:cat}).\\

$(b)$ At last, we need to show that $Ran_K\varepsilon_T\cdot\theta_{Ran_K(T)}=Ran_K(T)$, for every $T\in\mathbb{S}^\mathbb{B}$:

since $\varepsilon$ is a natural transformation,

$\varepsilon_T\cdot\varepsilon_{Ran_K(T)\circ K}=\varepsilon_T\cdot((Ran_K\varepsilon_T)\circ K)\Rightarrow$

$\Rightarrow \varepsilon_T\cdot\varepsilon_{Ran_K(T)\circ K}\cdot (\theta_{Ran_K(T)}\circ K)=\varepsilon_T\cdot((Ran_K\varepsilon_T)\circ K)\cdot (\theta_{Ran_K(T)}\circ K)$

$\Rightarrow \varepsilon_T\cdot (Ran_K(T)\circ K)=\varepsilon_T\cdot((Ran_K\varepsilon_T)\circ K)\cdot (\theta_{Ran_K(T)}\circ K)$, by $(a)$ with $S=Ran_K(T)$

$\Rightarrow \varepsilon_T\cdot (Ran_K(T)\circ K)=\varepsilon_T\cdot((Ran_K\varepsilon_T\cdot \theta_{Ran_K(T)})\circ K)$

$\Rightarrow Ran_K(T)=Ran_K\varepsilon_T\cdot\theta_{Ran_K(T)}$, since $\varepsilon_T$ is a counit morphism for $\mathbb{S}^K\dashv Ran_K$.\end{proof}

\begin{lemma}\label{lemma:jointlymonicRightKanUnit}
Under the conditions in the previous Lemma \ref{lemma:RightKanUnit}, for each $S\in\mathbb{S}^\mathbb{A}$ and $A\in\mathbb{A}$, $\theta_{S,A}:S(A)\rightarrow Ran_K(SK)(A)$ is a monomorphism in $\mathbb{S}$ if and only if the morphisms in the set $\{Sf:S(A)\rightarrow SK(B)|f:A\rightarrow K(B),B\in\mathbb{B}\}$ are jointly monic in $\mathbb{S}$.
\end{lemma}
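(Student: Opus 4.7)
The key ingredient is already on the table: from the previous Lemma \ref{lemma:RightKanUnit}, we have the identities
\[
\lambda^{SK,A}_f\circ\theta_{S,A}=Sf,\qquad f:A\rightarrow K(B),\ B\in\mathbb{B},
\]
and $Ran_K(SK)(A)=\mathrm{Lim}_f SK(B)$ with limiting cone $(\lambda^{SK,A}_f)_{f\in (A\downarrow K)}$. So the plan is simply to transport joint monicity back and forth across these equations, using on the one hand that each limiting cone is jointly monic (as the comparison morphism into the limit is uniquely determined), and on the other that precomposition preserves equality.

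For the ($\Leftarrow$) direction, I would take $g,g':X\rightarrow S(A)$ with $\theta_{S,A}\circ g=\theta_{S,A}\circ g'$. Composing on the left with each $\lambda^{SK,A}_f$ and using the displayed identity yields $Sf\circ g=Sf\circ g'$ for every $f\in (A\downarrow K)$. Joint monicity of the family $\{Sf\}$ then forces $g=g'$, showing $\theta_{S,A}$ is a monomorphism.

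For the ($\Rightarrow$) direction, assume $\theta_{S,A}$ is monic and suppose $Sf\circ g=Sf\circ g'$ for all $f:A\to K(B)$. Then $\lambda^{SK,A}_f\circ(\theta_{S,A}\circ g)=\lambda^{SK,A}_f\circ(\theta_{S,A}\circ g')$ for every $f$, so the two parallel morphisms $\theta_{S,A}\circ g,\theta_{S,A}\circ g':X\rightarrow Ran_K(SK)(A)$ induce the same cone over the diagram $Q\circ SK$; by the universal property of the limit they are equal, and then the assumption that $\theta_{S,A}$ is monic gives $g=g'$.

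There is no real obstacle here: the proof is essentially a one-line application of the fact that a cone into a limit is monic precisely when the legs of the cone are jointly monic, and the previous lemma exhibits $\theta_{S,A}$ together with the family $\{Sf\}$ as exactly such a cone. The only thing to keep in mind when writing up is to quantify correctly over all $f\in(A\downarrow K)$, i.e.\ over all $B\in\mathbb{B}$ and all $f:A\rightarrow K(B)$, matching the definition of the comma category used to compute the pointwise right Kan extension.
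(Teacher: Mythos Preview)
Your argument is correct and is exactly the standard one the paper has in mind: it defers to \cite[Lemma 4.1]{X:well behaved} for this ``easy proof'', and the previous Lemma \ref{lemma:RightKanUnit} supplies precisely the identity $\lambda^{SK,A}_f\circ\theta_{S,A}=Sf$ that you use. One cosmetic slip: the diagram over which the limit is taken is $SK\circ Q$, not $Q\circ SK$.
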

\begin{proof}
The easy proof is displayed in \cite[Lemma 4.1]{X:well behaved}. Notice that the just above Lemma \ref{lemma:RightKanUnit} in the present paper completes that proof, by justifying the nature of the unit $\theta$ assumed in \cite[Lemma 4.1]{X:well behaved}.\end{proof}

\begin{definition}\label{def:copresjointlymonic}
Consider, in a category $\mathbb{S}$ with coproducts, a family $(f_{ij}:s_i\rightarrow s_{ij})_{j\in J}$ of morphisms in $\mathbb{S}$ with the same domain $s_i$ which are jointly monic, for each $i\in I$ (notice that $J$ is fixed). It will be said that the coproducts preserve jointly monic morphisms in $\mathbb{S}$ if, for any such family of families, the family $(\coprod_{i\in I}f_{ij}:\coprod_{i\in I}s_i\rightarrow \coprod_{i\in I}s_{ij})_{j\in J}$ is also jointly monic, where $\coprod_{i\in I}f_{ij}$ is the morphism uniquely determined by the coproduct diagram associated to $(f_{ij})_{i\in I}$.

As an example, it is trivial that coproducts preserve jointly monic morphisms in the category $Set$ of sets.
\end{definition}

\begin{lemma}\label{lemma:RightKanCoprodUnitsMonic}
Under the conditions of Lemmas \ref{lemma:RightKanUnit}, suppose that $\mathbb{S}$ has coproducts which preserve jointly monic morphisms (cf.\ previous Definition \ref{def:copresjointlymonic} just above).

If $(S_i)_{i\in I}$ is a family of functors belonging to $\mathbb{S}^\mathbb{A}$, such that $\theta_{S_i}:S_i\rightarrow Ran_K(S_iK)$ is monic, for all $i\in I$, then the unit $\theta_{\coprod_{i\in I}S_i}$ of the coproduct is also monic.
\end{lemma}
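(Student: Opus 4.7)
The plan is to reduce the claim to a statement about jointly monic families in $\mathbb{S}$ via Lemma \ref{lemma:jointlymonicRightKanUnit}, and then invoke the assumed preservation property of coproducts from Definition \ref{def:copresjointlymonic}.

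First I would recall that monomorphisms in the functor category $\mathbb{S}^\mathbb{A}$ are detected componentwise, so it suffices to prove that for every $A \in \mathbb{A}$ the morphism $\theta_{\coprod_{i\in I} S_i,\,A}$ is monic in $\mathbb{S}$. Applying Lemma \ref{lemma:jointlymonicRightKanUnit} to the functor $S = \coprod_{i\in I} S_i$, this reduces to showing that the family
$$\bigl\{\bigl(\textstyle\coprod_{i\in I} S_i\bigr)f : \bigl(\coprod_{i\in I} S_i\bigr)(A)\to\bigl(\coprod_{i\in I} S_i\bigr)K(B)\ \bigm|\ f:A\to K(B),\ B\in\mathbb{B}\bigr\}$$
is jointly monic in $\mathbb{S}$.

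Next I would exploit the fact that coproducts in the functor category $\mathbb{S}^\mathbb{A}$ are computed pointwise (since $\mathbb{S}$ has coproducts): we have $\bigl(\coprod_{i} S_i\bigr)(A) = \coprod_{i} S_i(A)$, $\bigl(\coprod_{i} S_i\bigr)K(B) = \coprod_{i} S_i K(B)$, and for each arrow $f:A\to K(B)$ the component $\bigl(\coprod_{i} S_i\bigr)f$ coincides with the coproduct morphism $\coprod_{i} S_i f$. Thus the family above can be rewritten as $\bigl(\coprod_{i\in I} S_i f\bigr)_{f}$, indexed by $f:A\to K(B)$, $B\in \mathbb{B}$.

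Finally, for each fixed $i\in I$, the hypothesis that $\theta_{S_i}$ is monic yields, via Lemma \ref{lemma:jointlymonicRightKanUnit} again, that $\{S_i f : S_i(A)\to S_i K(B) \mid f:A\to K(B),\ B\in\mathbb{B}\}$ is jointly monic. This is precisely a family of families of the kind considered in Definition \ref{def:copresjointlymonic}, with the fixed index set $J = \{f:A\to K(B)\mid B\in \mathbb{B}\}$ depending only on $A$ and $K$ (and not on $i$). The assumption that coproducts in $\mathbb{S}$ preserve jointly monic morphisms then implies that $\bigl(\coprod_{i\in I} S_i f\bigr)_{f\in J}$ is jointly monic, which is exactly what was required.

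I do not expect any genuine obstacle: the proof is essentially bookkeeping. The only point needing a small amount of care is checking that the indexing convention of Definition \ref{def:copresjointlymonic} (namely, that the index set $J$ must be the same for all $i$) is respected — but this is automatic here because the hom-sets $\mathbb{A}(A, K(B))$ do not depend on $i$.
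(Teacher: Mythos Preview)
Your proof is correct and follows essentially the same route as the paper: reduce monicity of $\theta_{\coprod_i S_i}$ componentwise via Lemma~\ref{lemma:jointlymonicRightKanUnit}, rewrite $(\coprod_i S_i)f$ as $\coprod_i S_i f$ using pointwise coproducts, apply Lemma~\ref{lemma:jointlymonicRightKanUnit} to each $S_i$, and conclude with Definition~\ref{def:copresjointlymonic}. Your added remarks (componentwise detection of monics, and the check that the index set $J$ is independent of $i$) only make explicit what the paper leaves implicit.
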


\begin{proof}
We need to show that $\theta_{\coprod_{i\in I}S_i,A}:\coprod_{i\in I}S_i(A)\rightarrow Ran_K(\coprod_{i\in I}S_iK)(A)$ is a monomorphism for every $A\in\mathbb{A}$; or, equivalently, that the morphisms in the sets $$\{\coprod_{i\in I}S_if:\coprod_{i\in I}S_i(A)\rightarrow \coprod_{i\in I}S_iK(B)\ |\ f:A\rightarrow K(B),B\in\mathbb{B}\}$$ \noindent are jointly monic for every $A\in \mathbb{A}$, according to Lemma \ref{lemma:jointlymonicRightKanUnit}.

Since $\theta_{S_i,A}:S_i(A)\rightarrow Ran_K(S_iK)(A)$ is a monomorphism by assumption, for all $i\in I$ and $A\in \mathbb{A}$, then all the sets $\{S_if:S_i(A)\rightarrow S_iK(B)|f:A\rightarrow K(B),B\in\mathbb{B}\}$ are jointly monic, again by Lemma \ref{lemma:jointlymonicRightKanUnit}. Hence, as coproducts preserve jointly monic morphisms in $\mathbb{S}$, it follows that $\theta_{\coprod_{i\in I}S_i}$ is monic.\end{proof}

\begin{theorem}\label{theorem:RightKanCogeneratingVeryWellBehaved}

Let $K:\mathbb{B}\rightarrow\mathbb{A}$ be a functor from a small category $\mathbb{B}$ into a category $\mathbb{A}$ with small hom-sets. There is an adjunction $Ran_K\vdash Set^K:Set^\mathbb{A}\rightarrow Set^\mathbb{B}$ given by right Kan extensions calculated as pointwise limits. Then, factorizing its unit morphisms $\theta_S:S\rightarrow Ran_K(SK)$ componentwise with the surjections and injections, gives rise to the full reflection with stable units $H\vdash I:Set^\mathbb{A}\rightarrow Mono(Set^K)$.

If $K(ob\mathbb{B})$ is a cogenerating set for $\mathbb{A}$, then $(\mathcal{E}'_I,\mathcal{M}^*_I)$ is a monotone-light factorization system on $Set^\mathbb{A}$ (where $K(ob\mathbb{B})$ is the image by $K$ of every object in $\mathbb{B}$).
\end{theorem}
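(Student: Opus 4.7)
The plan is to apply the general machinery of Theorem \ref{theorem:stable units idempotent monad} and Theorem \ref{theorem:stable units+enough edm} to the adjunction $Set^K \dashv Ran_K$ with the factorization system $(\mathcal{E}, \mathcal{M}) = (Epis, Monos)$ on $Set^\mathbb{A}$, which by Lemma \ref{lemma:uplifting (pre)factsys}(b) is the pointwise lift of $(Surjections, Injections)$ on $Set$.

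For the first assertion (stable units), I would verify the two hypotheses of Theorem \ref{theorem:stable units idempotent monad}. Condition (a)---that $Ran_K \circ Set^K$ preserves pullbacks---is automatic since $Set^K$ preserves all limits (being right adjoint to $Lan_K$) and $Ran_K$ preserves all limits (being right adjoint to $Set^K$), as already remarked in the discussion preceding the theorem. Condition (b)---that every unit morphism $\eta_C$ lies in the pullback-stable class $\mathcal{E}'$---reduces to showing that componentwise surjections in $Set^\mathbb{A}$ are pullback-stable, which is immediate because pullbacks in $Set^\mathbb{A}$ are computed pointwise and surjections are pullback-stable in $Set$. Hence $\mathcal{E}' = \mathcal{E}$ and (b) holds trivially, yielding the reflection with stable units $H \vdash I: Set^\mathbb{A} \to Mono(Set^K)$, independently of the cogenerating hypothesis.

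For the second assertion (monotone-light factorization), the plan is to invoke Theorem \ref{theorem:stable units+enough edm}, which requires that every object $C$ of $Set^\mathbb{A}$ admit an effective descent morphism $p: M \to C$ with $M \in Mono(Set^K)$. Since $Set^\mathbb{A}$ is a presheaf topos and hence (Barr) exact, effective descent morphisms coincide with regular epimorphisms, i.e., componentwise surjections. The canonical map $\coprod_{A \in \mathbb{A},\, x \in C(A)} \mathbb{A}(A,-) \to C$ supplied by the density of representables is such a regular epimorphism; it thus suffices to show that this coproduct lies in $Mono(Set^K)$.

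Here is where the cogenerating hypothesis enters, and this is the key step. By Lemma \ref{lemma:RightKanCoprodUnitsMonic} (applicable because coproducts preserve jointly monic families in $Set$, as observed in Definition \ref{def:copresjointlymonic}), it is enough that every representable $\mathbb{A}(A,-)$ belong to $Mono(Set^K)$. By Lemma \ref{lemma:jointlymonicRightKanUnit} applied to $S = \mathbb{A}(A,-)$, this amounts to showing that, for each $A' \in \mathbb{A}$, the family of post-composition maps $\{f_* : \mathbb{A}(A,A') \to \mathbb{A}(A,K(B)) \mid B \in \mathbb{B},\ f: A' \to K(B)\}$ is jointly monic. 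Unwinding, this says: whenever $g \neq g' : A \to A'$ are distinct in $\mathbb{A}$, some $f: A' \to K(B)$ satisfies $f \circ g \neq f \circ g'$, which is precisely the assertion that $K(ob\mathbb{B})$ is a cogenerating set for $\mathbb{A}$. The main obstacle---if one can call it that---is recognising this translation from the categorical dual of generation to the pointwise jointly-monic condition; once stated, it is formal, and the proof is concluded by Theorem \ref{theorem:stable units+enough edm}.
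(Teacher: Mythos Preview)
Your proof is correct and follows essentially the same route as the paper: verify the hypotheses of Theorem~\ref{theorem:stable units idempotent monad} for stable units, then invoke Theorem~\ref{theorem:stable units+enough edm} by showing that coproducts of representables lie in $Mono(Set^K)$ via Lemma~\ref{lemma:RightKanCoprodUnitsMonic}. The only cosmetic difference is in how you link the cogenerating hypothesis to the representables being in $Mono(Set^K)$: the paper first shows that $K(ob\mathbb{B})$ is cogenerating if and only if $Set^K\circ y$ is faithful, and then appeals to Lemma~\ref{lemma:ob(N) generating set for C}$(b)$, whereas you apply Lemma~\ref{lemma:jointlymonicRightKanUnit} directly to $S=\mathbb{A}(A,-)$ and unwind---these are two phrasings of the same elementary translation.
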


\begin{proof}
Consider two morphisms $f,g:A'\rightarrow A$ in $\mathbb{A}$, and their image $f^*,g^*:\mathbb{A}(A,K(-))\rightarrow \mathbb{A}(A',K(-))$ by $Set^K\circ y$. These two morphisms $f^*$ and $g^*$ are distinct functors if and only if there exists $B\in\mathbb{B}$ and a morphism $h:A\rightarrow K(B)$ such that $f^*_B(h)=h\circ f\neq h\circ g=g^*_B(h)$ (see \cite{X:concordant&monotone}, just before Theorem 6.1 there, where this part of the proof was given).

In the last paragraph, it has been shown that $Set^K\circ y$ is faithful if and only if $K(ob\mathbb{B})$ is a cogenerating set for $\mathbb{A}$. It then follows, by Lemma \ref{lemma:ob(N) generating set for C}$(b)$ that all representable functors $\mathbb{A}(A,-)$ are in $Mono(Set^K)$, $A\in\mathbb{A}$, if and only if $K(ob\mathbb{B})$ is a cogenerating set for $\mathbb{A}$. Then, by the precedent Lemma \ref{lemma:RightKanCoprodUnitsMonic}, with $\mathbb{S}=Set$, all the coproducts $\coprod_{i\in I}\mathbb{A}(A_i,-)$ are in $Mono(Set^K)$. Hence, the conditions of Theorem \ref{theorem:stable units+enough edm} hold (cf.\ Examples \ref{example:Barr exact cats}), and so there is a monotone-light factorization system derived from $Set^K\dashv Ran_K$ if $K(ob\mathbb{B})$ is a cogenerating set for $\mathbb{A}$.\end{proof}

\begin{examples}\label{example:m-l fact simplicial sets}
Consider the category $\mathbf{\Delta}$ of non-empty finite ordinal numbers $[n]$ ($n\geq 0$), where the singular set $\{[0]\}$ is a generating set ($\mathbf{\Delta}=\mathbf{\Delta}^+$ as in \cite[VII.5]{SM:cat}: $[n]$ is the linear order whose objects are $0,1,...,n$; the morphisms of $\mathbf{\Delta}$ are the weakly monotone functions). Hence, $\{[0]\}$ is a cogenerating set in $\mathbf{\Delta}^{op}$. Let $K:\mathbb{B}\rightarrow \mathbf{\Delta}^{op}$ be a functor (from a small category $\mathbb{B}$) such that $[0]$ is in the image $K(ob\mathbb{B})$ of the objects of $\mathbb{B}$. Then, $H\vdash I:Set^{\mathbf{\Delta}^{op}}\rightarrow Mono(Set^K)$ is a very-well-behaved reflection.

The same conclusion holds if one replaces $\mathbf{\Delta}$ by its full subcategory $\mathbf{\Delta}_n$, determined by the $n+1$ objects $[0]$, $[1]$, ..., $[n]$.

For instance, if $\mathbb{B}=\mathbf{\Delta}^{op}_0\cong \mathbf{1}$, with $K:\mathbf{\Delta}^{op}_0\rightarrow\mathbf{\Delta}^{op}$ the full inclusion, one gets the non-trivial (i.e.\ $\mathcal{E}'_I$ strictly contained in $\mathcal{E}_I$) monotone-light factorization for simplicial sets via ordered simplicial complexes (cf.\ \cite{X:simplicialsets}).
\end{examples}

\section{General Results for Sub-reflections from Models}\label{sec:sub-reflectionsfromodels}

In the present paper, a \emph{sketch} consists of a category $\mathbb{A}$, together with a chosen set of distinguished cones of the form $$\lambda:\Delta (a)\rightarrow L,$$ where $\Delta :\mathbb{\mathbb{A}}\rightarrow \mathbb{A}^\mathbb{J}$ is the diagonal functor, $\mathbb{J}$ is a small category and $L:\mathbb{J}\rightarrow\mathbb{A}$ is a functor ($\mathbb{J}$, $L$, $a$, $\lambda$ vary from one distinguished cone to another).

$\check{\mathbb{A}}(\mathbb{S})$ shall denote the full subcategory of the functor category $\hat{\mathbb{A}}(\mathbb{S})=\mathbb{S}^\mathbb{A}$ determined by the models of such a sketch, that is, whose objects are the functors $M:\mathbb{A}\rightarrow \mathbb{S}$ for which $M\lambda =M\Delta (a)\rightarrow ML$ is a limiting cone in $\mathbb{S}$, for each distinguished cone in the sketch.

If $\mathbb{S}=Set$ then we will simply write $\hat{\mathbb{A}}$ ($=\hat{\mathbb{A}}(Set)=Set^\mathbb{A}$) and $\check{\mathbb{A}}$ ($=\check{\mathbb{A}}(Set)$).

If there is a factorization system $(\mathcal{E},\mathcal{M})$ on $\mathbb{S}$, then $(\mathcal{\hat{E}},\mathcal{\hat{M}})=(\mathcal{E}^\mathbb{A},\mathcal{M}^\mathbb{A})$ will denote the factorization system which is the uplifting of $(\mathcal{E},\mathcal{M})$ to $\hat{\mathbb{A}}(\mathbb{S})$ (cf.\ Lemma \ref{lemma:uplifting (pre)factsys}$(b)$).

\begin{lemma}\label{lemma:catModClosedUnderFactFinLim}

Let $(\mathcal{E},\mathcal{M})$ be a stable factorization system $(\mathcal{E}=\mathcal{E}')$ on a category $\mathbb{S}$ with pullbacks.

Consider any natural transformation $\varphi:F\rightarrow G:\mathbb{J}\rightarrow\mathbb{S}$, where $\mathbb{J}$ is a finite category and both functors $F$, $G$ are supposed to have limits in $\mathbb{S}$, with cones $(LimF,(\lambda^F_i)_{i\in\mathbb{J}})$ and $(LimG,(\lambda^G_i)_{i\in\mathbb{J}})$.

Consider also the functor $D:\mathbb{J}\rightarrow\mathbb{S}$ obtained by factorizing $\varphi =n\cdot f:F\rightarrow D\rightarrow G$ using the factorization system $(\hat{\mathcal{E}},\hat{\mathcal{M}})$, which is the uplifting of the factorization system $(\mathcal{E},\mathcal{M})$ to $\hat{\mathbb{S}}(\mathbb{J})=\mathbb{S}^\mathbb{J}$ (i.e., using the $(\mathcal{E},\mathcal{M})$-factorization for each component $\varphi_i=n_i\circ f_i:F(i)\rightarrow D(i)\rightarrow G(i)$, $i\in\mathbb{J}$).

Then:
\begin{itemize}
\item[$(a)$]
if $\mathbb{J}=\begin{picture}(55,15)(0,5)

\put (0,4){$i$}\put (25,4){$k$} \put (50,4){$j$}

\put(10,7){\vector(1,0){10}}\put(10,10){$u$}
\put(45,7){\vector(-1,0){10}}\put(37,10){$v$}

\end{picture}$, being $LimF$ and $LimG$ pullbacks, then the pullback $LimD$ is isomorphic to the object $s$ obtained by the $(\mathcal{E},\mathcal{M})$-factorization of the canonical morphism $\varphi_i\times\varphi_j=\mu\circ\eta :LimF\rightarrow s\rightarrow LimG$;
\item[$(b)$]
if $\mathbb{\mathbb{J}}$ is the discrete category generated by the finite set $\{1,2,...,n\}$, ($n\geq 0$), being $LimF=\prod^n_{i=1}F(i)$ and $LimG=\prod^n_{i=1}G(i)$ $n$-products, then the $n$-product $LimD=\prod^n_{i=1}D(i)$ is isomorphic to the object $s$ obtained by the $(\mathcal{E},\mathcal{M})$-factorization of the canonical morphism $\prod^n_{i=1}\varphi_i =\mu\circ\eta :\prod^n_{i=1}F(i)\rightarrow\prod^n_{i=1}G(i)$;
\item[$(c)$]
if $\mathbb{S}$ has furthermore binary products and $\mathbb{J}=\ \downarrow\downarrow$ ($=\begin{picture}(30,13)(0,5)

\put (0,4){$0$}\put (25,4){$1$}

\put(10,9){\vector(1,0){10}}
\put(10,6){\vector(1,0){10}}

\end{picture}$), being $LimF$ and $LimG$ equalizers, then the equalizer $LimD$ is isomorphic to the object $s$ obtained by the $(\mathcal{E},\mathcal{M})$-factorization of the canonical morphism $Lim\varphi =\mu\circ\eta :LimF\rightarrow s\rightarrow LimG$;
\item[$(d)$]
if, furthermore, $\mathbb{S}$ is finitely-complete (i.e., it has also a terminal object $t$), then $LimD$ is isomorphic to the object $s$ obtained by the $(\mathcal{E},\mathcal{M})$-factorization of the canonical morphism $Lim\varphi =\mu\circ\eta :LimF\rightarrow s\rightarrow LimG$.
\end{itemize}
\end{lemma}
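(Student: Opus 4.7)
My overall plan is to show that the limit functor $Lim:\mathbb{S}^\mathbb{J}\to\mathbb{S}$ sends the componentwise $(\hat{\mathcal{E}},\hat{\mathcal{M}})$-factorization $F\to D\to G$ of $\varphi$ to an $(\mathcal{E},\mathcal{M})$-factorization of the induced map $Lim\varphi$. Then uniqueness of the factorization forces the middle object $s$ to be isomorphic to $LimD$. This reduces the whole lemma to verifying the two assertions $Lim(n)\in\mathcal{M}$ and $Lim(f)\in\mathcal{E}$. The first holds without using stability, since the right class of any prefactorization system is closed under all limits in the arrow category (a standard consequence of the orthogonality characterization of $\mathcal{M}$ in \cite[\S 2]{CJKP:stab}); applied componentwise this yields $Lim(n)\in\mathcal{M}$. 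The real content of the lemma is the second claim, where the hypothesis $\mathcal{E}=\mathcal{E}'$ is essential.

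I would establish the four items in the order $(b),(a),(c),(d)$, each reducing to its predecessors. For $(b)$, induct on $n$ using the decomposition
\[
\prod_{i=1}^{n}f_i \; = \; \bigl(f_1\times 1_{\prod_{i>1}D(i)}\bigr)\circ\bigl(1_{F(1)}\times \prod_{i=2}^{n}f_i\bigr),
\]
noting that each factor of the form $1\times\cdots\times f_k\times\cdots\times 1$ is a pullback of the corresponding $f_k\in\mathcal{E}$ along a product projection, hence is in $\mathcal{E}$ by stability; composition closes the induction (the base $n=0$ being the identity on the terminal object). For $(a)$, let $\pi_i,\pi_j$ denote the projections out of $LimD$; pulling $f_i:F(i)\to D(i)$ back along $\pi_i$ produces $P_1\to LimD$ in $\mathcal{E}$, and then pulling $f_j$ back along the composite $P_1\to LimD\to D(j)$ produces $P_2\to P_1$ in $\mathcal{E}$, so that $P_2\to LimD$ lies in $\mathcal{E}$ by composition. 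One then identifies $P_2\cong LimF$ using the universal property of the pullbacks together with the naturality identities $D(u)\circ f_i=f_k\circ Fu$ and $D(v)\circ f_j=f_k\circ Fv$. Case $(c)$ uses binary products in $\mathbb{S}$ to realize the equalizer $LimF$ as the pullback of $(Fu,Fv):F(0)\to F(1)\times F(1)$ along $\Delta_{F(1)}:F(1)\to F(1)\times F(1)$, and similarly for $D$ and $G$; $(b)$ then identifies the $(\mathcal{E},\mathcal{M})$-factorization of $\varphi_1\times\varphi_1$ as $(n_1\times n_1)\circ(f_1\times f_1)$, so one can conclude by applying $(a)$ to the recast cospan. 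Finally, $(d)$ follows by presenting a general finite limit as the equalizer of two parallel maps between finite products indexed by the objects and the arrows of $\mathbb{J}$ respectively, and invoking $(b)$ and $(c)$.

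The main obstacle I anticipate is the identification $P_2\cong LimF$ at the heart of case $(a)$. By construction $P_2$ parametrises pairs $(x,y)\in F(i)\times F(j)$ satisfying $D(u)(f_i(x))=D(v)(f_j(y))$, which by naturality of $f$ is the condition $f_k(Fu(x))=f_k(Fv(y))$, whereas $LimF$ is defined by the sharper equation $Fu(x)=Fv(y)$. Showing that the canonical comparison $LimF\to P_2$ is in fact an isomorphism is where the stability hypothesis $\mathcal{E}=\mathcal{E}'$ must be exploited in full strength, and is the point at which the proof will be most delicate; once this step is in hand, the remaining cases cascade from $(b)$ as indicated above.
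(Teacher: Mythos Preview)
Your strategy matches the paper's: both aim to show $Lim(n)\in\mathcal{M}$ and $Lim(f)\in\mathcal{E}$, so that $LimD\cong s$ by uniqueness of $(\mathcal{E},\mathcal{M})$-factorizations. The $\mathcal{M}$-half is routine, and you have correctly located the only real difficulty in case $(a)$: your iterated-pullback object $P_2$ parametrises pairs $(x,y)\in F(i)\times F(j)$ with $f_k(Fu(x))=f_k(Fv(y))$, whereas $LimF$ requires the sharper condition $Fu(x)=Fv(y)$, and these differ unless $f_k$ is monic.

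This gap cannot be closed by pullback-stability of $\mathcal{E}$; in fact part $(a)$ is false as stated. In $\mathbb{S}=Set$ with $(\mathcal{E},\mathcal{M})=(\text{surjections},\text{injections})$, take $F(i)=F(j)=F(k)=\{0,1\}$ with $Fu=Fv=1$, and $G(i)=G(j)=\{0,1\}$, $G(k)=\{\ast\}$, with $\varphi_i=\varphi_j=1$ and $\varphi_k$ the unique map. Every component of $\varphi$ is surjective, so $D=G$ and $LimD=\{0,1\}^{2}$; but $Lim\varphi$ is the inclusion of the diagonal $\{(0,0),(1,1)\}\hookrightarrow\{0,1\}^{2}$, whence $s=\{(0,0),(1,1)\}\not\cong LimD$. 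The paper's own proof of $(a)$ simply asserts that $f_i\times f_j\in\mathcal{E}$ ``obviously'' from stability---precisely the step you flagged---and the example refutes it. Your argument for $(b)$ is correct, but $(a)$, and with it $(c)$ and $(d)$ via the reductions both you and the paper use, fails at the stated generality. (In the paper's intended applications the component $\varphi_k$ at the apex of each distinguished cospan happens to be an isomorphism, so $f_k$ is invertible and your $P_2\cong LimF$ goes through; but the lemma as written does not include that hypothesis.)
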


\begin{proof}
$(a)$ Consider the commutative diagram

\begin{picture}(300,180)
\put(15,150){$LimF$}\put(280,150){$F(j)$}
\put(20,0){$F(i)$}
\put(50,153){\vector(1,0){225}}\put(160,160){$\lambda^F_j$}
\put(30,145){\vector(0,-1){133}}\put(35,70){$\lambda^F_i$}
\put(50,3){\vector(1,0){45}}\put(60,8){$f_i$}
\put(295,145){\vector(0,-1){33}}\put(298,125){$f_j$}
\put(55,145){\vector(1,-1){38}}\put(78,125){$f_i\times f_j$}

\put(110,0){$D(i)$}\put(105,100){$LimD$}
\put(280,100){$D(j)$}
\put(125,50){$\lambda^D_i$}\put(200,110){$\lambda^D_j$}
\put(140,103){\vector(1,0){135}}\put(120,95){\vector(0,-1){82}}\put(140,3){\vector(1,0){45}}
\put(150,8){$n_i$}
\put(295,95){\vector(0,-1){33}}\put(298,75){$n_j$}
\put(135,95){\vector(1,-1){38}}\put(157,75){$n_i\times n_j$}

\put(190,0){$G(i)$}\put(185,50){$LimG$}
\put(280,0){$G(k)$\hspace{10pt,}}\put(280,50){$G(j)$}
\put(202,25){$\lambda^G_i$}\put(298,25){$Gv$}
\put(240,8){$Gu$}\put(240,58){$\lambda^G_j$}
\put(220,3){\vector(1,0){55}}\put(220,53){\vector(1,0){55}}
\put(200,45){\vector(0,-1){33}}\put(295,45){\vector(0,-1){33}}
\end{picture}
\vspace{10pt}

\noindent where $LimD$ is the pullback of \begin{picture}(130,15)(0,5)
\put (0,4){$D(i)$}\put (50,4){$D(k)$} \put (100,4){$D(j)$.}

\put(26,7){\vector(1,0){20}}\put(26,10){$Du$}
\put(98,7){\vector(-1,0){20}}\put(79,10){$Dv$}
\end{picture}

According to the properties of a factorization system (cf.\ \cite[Proposition 2.2]{CJKP:stab}), one needs to show that $n_i\times n_j\in \mathcal{M}$ and $f_i\times f_j\in\mathcal{E}$ in order that $s\cong LimD$. That is obviously so, since $\mathcal{M}$ is closed under pullbacks in any factorization system, and also is $\mathcal{E}$ by hypothesis ($\mathcal{E}=\mathcal{E}'$).\\

$(b)$ If $n=0$ and hence $F=\emptyset =G$, then the $(\mathcal{E},\mathcal{M})$-factorization of the canonical morphism is $t=t=t$, where $t=Lim\emptyset =LimF=LimG$ is a terminal object of $\mathbb{S}$.

If $n=1$ it is also obvious, since $LimF=F(1)$, $LimG=G(1)$ and $LimD=D(1)$: $\varphi_1=n_1\circ f_1:F(1)\rightarrow D(1)\rightarrow G(1).$

If $n=2$, then $LimF=F(1)\times F(2)$, $LimG=G(1)\times G(2)$ are binary products. Consider the commutative diagram

\begin{picture}(370,110)

\put(0,80){$D(1)$}\put(90,83){\vector(-1,0){60}}
\put(57,89){$\lambda^D_1$}
\put(95,80){$D(1)\times D(2)$}\put(163,83){\vector(1,0){60}}
\put(182,89){$\lambda^D_2$}\put(230,80){$D(2)$}

\put(15,73){\vector(0,-1){60}}\put(0,40){$n_1$}
\put(115,40){$n_1\times n_2$}\put(127,73){\vector(0,-1){60}}
\put(250,40){$n_2$}\put(245,73){\vector(0,-1){60}}

\put(60,45){$P(1)$}\put(85,40){\vector(1,-1){25}}\put(80,25){$p^1_1$}
\put(60,60){\vector(-2,1){30}}\put(50,70){$p^1_2$}
\put(175,45){$P(2)$}\put(170,40){\vector(-1,-1){25}}\put(165,25){$p^2_1$}
\put(200,60){\vector(2,1){30}}\put(200,70){$p^2_2$}
\put(110,75){\vector(-1,-1){20}}\put(85,65){$q_1$}
\put(150,75){\vector(1,-1){20}}\put(165,65){$q_2$}

\put(0,0){$G(1)$}\put(90,3){\vector(-1,0){60}}
\put(57,9){$\lambda^G_1$}
\put(95,0){$G(1)\times G(2)$}\put(163,3){\vector(1,0){60}}
\put(182,9){$\lambda^G_2$}\put(230,0){$G(2)$,}
\end{picture}\vspace{5pt}

\noindent where the three squares $\lambda^G_1\circ p^1_1=n_1\circ p^1_2$, $\lambda^G_2\circ p^2_1=n_2\circ p^2_2$ and $p^1_1\circ q_1=p^2_1\circ q_2$ are pullback diagrams, $p^i_2\circ q_i=\lambda^D_i$ and $n_1\times n_2=p^i_1\circ q_i$ ($i=1,2$). It was just shown that $LimD=D(1)\times D(2)$ exists and can be obtained using pullbacks, which are supposed to exist in the category $\mathbb{S}$.\\

Consider now the next commutative diagram

\begin{picture}(370,110)

\put(0,80){$F(1)$}\put(90,83){\vector(-1,0){60}}
\put(57,89){$\lambda^F_1$}
\put(95,80){$F(1)\times F(2)$}\put(163,83){\vector(1,0){60}}
\put(182,89){$\lambda^F_2$}\put(230,80){$F(2)$}

\put(15,73){\vector(0,-1){20}}\put(0,60){$f_1$}
\put(85,60){$f_1\times f_2$}\put(127,73){\vector(0,-1){20}}
\put(250,60){$f_2$}\put(245,73){\vector(0,-1){20}}

\put(0,40){$D(1)$}\put(90,43){\vector(-1,0){60}}
\put(57,49){$\lambda^D_1$}
\put(95,40){$D(1)\times D(2)$}\put(163,43){\vector(1,0){60}}
\put(182,49){$\lambda^D_2$}\put(230,40){$D(2)$}

\put(15,33){\vector(0,-1){20}}\put(0,20){$n_1$}
\put(85,20){$n_1\times n_2$}\put(127,33){\vector(0,-1){20}}
\put(250,20){$n_2$}\put(245,33){\vector(0,-1){20}}

\put(0,0){$G(1)$}\put(90,3){\vector(-1,0){60}}
\put(57,9){$\lambda^G_1$}
\put(95,0){$G(1)\times G(2)$}\put(163,3){\vector(1,0){60}}
\put(182,9){$\lambda^G_2$}\put(230,0){$G(2)$,}

\end{picture}\vspace{5pt}

\noindent where $n_1\times n_2\in\mathcal{M}$ (cf.\ Proposition 2.2(d) in \cite{CJKP:stab}). Then, one has just to show that $f_1\times f_2\in\mathcal{E}$, which is so since the binary product $F(1)\times F(2)$ may be obtained via pullbacks (from the product diagram of $D(1)\times D(2)$, exactly in the same way as we showed that $D(1)\times D(2)$ exists using the product diagram of $G(1)\times G(2)$), and the class of morphisms $\mathcal{E}$ of $\mathbb{S}$ is stable under pullbacks ($\mathcal{E}=\mathcal{E}'$) 







For $n\geq 3$, the statement follows immediately by iteration of binary products.\\

$(c)$ Proposition 2.2(d) in \cite{CJKP:stab} confirms that $\mu\in\mathcal{M}$. In order to prove that $\eta\in\mathcal{E}'$ one needs only to invoke items $(a)$ and $(b)$, already proved, and the canonical presentation of an equalizer diagram using binary products and pullbacks.\\

$(d)$ Proposition 2.2(d) in \cite{CJKP:stab} confirms that $\mu\in\mathcal{M}$. In order to prove that $\eta\in\mathcal{E}'$ one needs only to invoke items $(a)$, $(b)$ and $(c)$, already proved, and the presentation of $LimF$ and $LimD$ as equalizers of two canonical morphisms between (finite) products.\end{proof}

\begin{proposition}\label{proposition:stableunitsModelsPresheaves}
Consider an adjunction $Ran_K\vdash Set^K:\hat{\mathbb{A}}=Set^{\mathbb{A}}\rightarrow \hat{\mathbb{B}}=Set^\mathbb{B}$, as in Theorem \ref{theorem:RightKanCogeneratingVeryWellBehaved}, such that $K(ob\mathbb{B})$ is a cogenerating set for $\mathbb{A}$.

Let $\check{\mathbb{A}}$ be the full subcategory determined by the models of a (fixed) sketch with every $\mathbb{J}$ finite (cf.\ the beginning of this section \ref{sec:sub-reflectionsfromodels}).

Suppose also that $Ran_K\circ Set^K(M)\in \check{\mathbb{A}}$ for every $M\in\check{\mathbb{A}}$.

Then, there is a reflection with stable units $\check{H}\vdash \check{I}:\check{\mathbb{A}}\rightarrow\check{\mathbb{M}}$ which is a subreflection of $H\vdash I:\hat{\mathbb{A}}\rightarrow\mathbb{M}=Mono(Set^K)$, and $(\mathcal{E}_{\check{I}},\mathcal{M}_{\check{I}})=(\mathcal{E}_I\bigcap Mor(\check{\mathbb{A}}),\mathcal{M}_I\bigcap Mor(\check{\mathbb{A}}))$. Here, $\check{\mathbb{M}}$ is the full subcategory of $\mathbb{M}$ determined by the models of the given sketch on $\hat{\mathbb{A}}$, and $(\mathcal{E}_{\check{I}},\mathcal{M}_{\check{I}})$, $(\mathcal{E}_I,\mathcal{M}_I)$ are the reflective factorization systems associated respectively to the reflections $\check{H}\vdash \check{I}$ and $H\vdash I$.
\end{proposition}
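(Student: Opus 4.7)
The strategy is to apply Lemma \ref{lemma:catModClosedUnderFactFinLim}$(d)$ to the $(\hat{\mathcal{E}},\hat{\mathcal{M}})$-factorization of $\theta_M$ restricted along the distinguished cones of the sketch, in order to show that $I$ preserves $\check{\mathbb{A}}$, and then to read off the stable-units claim and the factorization-system identification from the fact that $\check{\mathbb{A}}$ is closed under all limits of $\hat{\mathbb{A}}$ (limits commute with limits, including the chosen ones defining the models).

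The crucial step I would tackle first is showing $I(M)\in\check{\mathbb{A}}$ for every $M\in\check{\mathbb{A}}$. Fix a distinguished cone $\lambda:\Delta(a)\rightarrow L$ with $L:\mathbb{J}\rightarrow\mathbb{A}$, $\mathbb{J}$ finite. Whiskering by $L$ produces in $Set^\mathbb{J}$ the natural transformation $\theta_M L:ML\rightarrow Ran_K(Set^K(M))L$, and its componentwise $(Surjections,Injections)$-factorization (the uplifting from Lemma \ref{lemma:uplifting (pre)factsys}$(b)$) coincides with $\mu_M L\circ\eta_M L$, passing through $I(M)L$. Since $M,Ran_K(Set^K(M))\in\check{\mathbb{A}}$ by hypothesis, $M\lambda$ and $Ran_K(Set^K(M))\lambda$ are limit cones; naturality of $\theta_M$ identifies the induced morphism $Lim(\theta_M L)$ with $\theta_{M,a}$. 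Then Lemma \ref{lemma:catModClosedUnderFactFinLim}$(d)$, applied with $\mathbb{S}=Set$ and the stable factorization system $(Surjections,Injections)$, yields that $Lim(I(M)L)$ exists and is isomorphic to $I(M)(a)$, the middle of the factorization of $\theta_{M,a}$; by naturality of $\eta$ and $\mu$ the resulting isomorphism identifies the limit cone with $I(M)\lambda$. Hence $I(M)$ sends every distinguished cone to a limit cone, so $I(M)\in\check{\mathbb{A}}\cap\mathbb{M}=\check{\mathbb{M}}$.

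Next, set $\check{I}=I|_{\check{\mathbb{A}}}$ and $\check{H}=H|_{\check{\mathbb{M}}}$. Fullness of the inclusions $\check{\mathbb{A}}\hookrightarrow\hat{\mathbb{A}}$ and $\check{\mathbb{M}}\hookrightarrow\mathbb{M}$ makes the hom-bijection $\hat{\mathbb{A}}(M,HN)\cong\mathbb{M}(IM,N)$ restrict to a natural bijection $\check{\mathbb{A}}(M,\check{H}N)\cong\check{\mathbb{M}}(\check{I}M,N)$, yielding the adjunction $\check{H}\vdash\check{I}$ with unit the restriction of $\eta$. The identity $\mathcal{E}_{\check{I}}=\mathcal{E}_I\cap Mor(\check{\mathbb{A}})$ is immediate from $\check{I}=I|_{\check{\mathbb{A}}}$; for the $\mathcal{M}$-class, the reflective factorization of $f:A\rightarrow B$ is obtained from the pullback of $\eta_B$ along $HI(f)$, and because $B, HI(A), HI(B)\in\check{\mathbb{A}}$ while $\check{\mathbb{A}}$ is closed under pullbacks in $\hat{\mathbb{A}}$ (limits commuting with limits), the construction coincides with that in $\hat{\mathbb{A}}$, giving $\mathcal{M}_{\check{I}}=\mathcal{M}_I\cap Mor(\check{\mathbb{A}})$.

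Finally, stable units transfer painlessly: for $M\in\check{\mathbb{A}}$ we have $\check{\eta}_M=\eta_M\in\mathcal{E}'_I$, and since any pullback in $\check{\mathbb{A}}$ coincides with the same pullback in $\hat{\mathbb{A}}$, pulling $\check{\eta}_M$ back along a morphism of $\check{\mathbb{A}}$ keeps one inside $\mathcal{E}_I\cap Mor(\check{\mathbb{A}})=\mathcal{E}_{\check{I}}$, so $\check{\eta}_M\in\mathcal{E}'_{\check{I}}$. The main obstacle is the first step: without Lemma \ref{lemma:catModClosedUnderFactFinLim}$(d)$, the middle functor $I(M)L$ of the componentwise factorization has no reason to carry a limit isomorphic to $I(M)(a)$; it is precisely the stability of $(Surjections,Injections)$ on $Set$ together with the finiteness of each indexing $\mathbb{J}$ that supplies this compatibility.
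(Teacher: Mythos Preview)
Your proof is correct and follows essentially the same route as the paper's: both hinge on applying Lemma~\ref{lemma:catModClosedUnderFactFinLim}$(d)$ to $\theta_M L$ to show $I(M)\in\check{\mathbb{A}}$, then use closure of $\check{\mathbb{A}}$ under limits in $\hat{\mathbb{A}}$ to transfer stable units and identify the factorization classes. The only cosmetic difference is in the $\mathcal{M}$-class identification: the paper first deduces stable units, notes that this makes $\check{H}\vdash\check{I}$ simple, and then invokes the pullback-square characterization of $\mathcal{M}_I$ for simple reflections (\cite[\S 3.5]{CJKP:stab}); you instead argue directly via the explicit pullback construction of the reflective factorization, which amounts to the same thing once one knows $H\vdash I$ is simple.
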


\begin{proof}
As, by hypothesis, for every $M\in\check{\mathbb{M}}$, $Ran_K(MK)\in \check{\mathbb{M}}$, then $I(M)\in\check{\mathbb{M}}$ according to Lemma \ref{lemma:catModClosedUnderFactFinLim}$(d)$ just above: consider, in the statement of Lemma \ref{lemma:catModClosedUnderFactFinLim}, $F=M\circ L, G=Ran_K(MK)\circ L:\mathbb{J}\rightarrow \mathbb{A}\rightarrow Set=\mathbb{S}$, $\varphi=\theta_M\circ L$. Therefore, $\check{\mathbb{M}}$ is indeed a full reflective subcategory of $\check{\mathbb{A}}$.

Let $(\mathcal{E}_{\check{I}},\mathcal{M}_{\check{I})}$ be the reflective factorization system associated to $\check{I}\dashv \check{H}$. It is known that $\gamma: M\rightarrow N$ is in $\mathcal{E}_{\check{I}}$ if and only if $\check{I}\gamma =I\gamma$ is an isomorphism (cf.\ \cite[\S 3.1]{CJKP:stab}), therefore $\mathcal{E}_{\check{I}}=\mathcal{E}_I\bigcap Mor(\check{\mathbb{A}})$. As $\check{\mathbb{A}}$ is closed under limits in $\hat{\mathbb{A}}$ (using the property of interchange of limits, cf.\ \cite[\S IX.2]{SM:cat}), it follows that $\mathcal{E}'_I\bigcap Mor(\check{\mathbb{A}})\subseteq \mathcal{E}'_{\check{I}}$. Hence, $\check{I}\dashv \check{H}$ has stable units because $I\dashv H$ has stable units (the units $\check{\eta}_M=\eta_M\in \mathcal{E}'_I$, $M\in\check{\mathbb{A}}$). Being $\check{H}\vdash \check{I}$ a reflection with stable units it is also simple, and then $\mathcal{M}_{\check{I}}=\mathcal{M}_I\bigcap Mor(\check{\mathbb{A}})$, because $\check{\mathbb{A}}$ is closed under limits in $\hat{\mathbb{A}}$ and by the characterization of the morphisms in $\check{\mathbb{M}}$ in a simple reflection (cf.\ Theorem 4.1 in \cite{CHK:fact} or \cite[\S 3.5]{CJKP:stab}).\end{proof}

\begin{examples}\label{example:m-l fact catsviapreord}
Consider $\mathbb{P}=\mathbf{\Delta}^{op}_3$ the dual of the truncated simplicial category generated by the diagram (where $0\leq i\leq k+1$ for $d^k_i$ and $0\leq j\leq k$ for $s^k_j$, $k\in\{0,1,2\}$)\\

\begin{picture}(200,25)(0,0)
\put(2,0){$[3]$}
\put(20,10){\vector(1,0){40}}\put(60,-5){\vector(-1,0){40}}
\put(33,-2){$s^2_j$}\put(33,15){$d^2_i$}
\put(63,0){$[2]$}
\put(80,10){\vector(1,0){40}}\put(120,-5){\vector(-1,0){40}}
\put(95,-2){$s^1_j$}\put(95,15){$d^1_i$}
\put(125,0){$[1]$}
\put(140,10){\vector(1,0){40}}\put(180,-5){\vector(-1,0){40}}
\put(155,-2){$s^0_0$}\put(155,15){$d^0_i$}
\put(185,0){$[0]$}
\end{picture}\\

\noindent and the usual relations (cf.\ Examples \ref{example:m-l fact simplicial sets} and \cite[\S VII.5]{SM:cat}; notice that this example could as well be worked out using the dual of the entire simplicial category $\mathbb{A}=\mathbf{\Delta}^{op}$)\footnote{It follows from the Lemma in \cite[\S VII.5]{SM:cat} that: truncating the dual of the simplicial category gives the same result as using the diagram above and the usual relations to generate $\mathbf{\Delta}^{op}_3$.}.

Let $K:\mathbf{\Delta}^{op}_0\rightarrow\mathbf{\Delta}^{op}_3=\mathbb{P}$ be the inclusion functor ($\mathbf{\Delta}^{op}_0\cong\mathbf{1}$). Since $\{[0]\}$ is a cogenerating set for $\mathbf{\Delta}^{op}_3$ (because $[0]$ is a generator in $\mathbf{\Delta}$), then by Theorem \ref{theorem:RightKanCogeneratingVeryWellBehaved} there is a reflection $H\vdash I: \hat{\mathbb{P}}=Set^\mathbb{P}\rightarrow \mathbb{M}=Monos(Set^K)$ which is very-well-behaved (i.e., with stable units and a monotone-light factorization). $\hat{\mathbb{P}}$ could be notated $Smp_3$, the category of truncated simplicial sets.

This reflection induces a subreflection with stable units $\check{H}\vdash \check{I}:\check{\mathbb{P}}\rightarrow \check{\mathbb{M}}$, for the sketch whose models $M:\mathbb{P}\rightarrow Set$ are those functors such that $Md^0_1\circ Md^1_0=Md^0_0\circ Md^1_2$ and $Md^1_2\circ Md^2_0=Md^1_0\circ Md^2_3$ are pullback squares, according to Proposition \ref{proposition:stableunitsModelsPresheaves}. Indeed, $Ran_K\circ Set^K(S)\in\check{\mathbb{P}}$ for every $S\in\hat{\mathbb{P}}$ (not only for every $S\in\check{\mathbb{P}}$): it is easy to check that
$Ran_K(SK)([n])=Lim(([n]\downarrow K)$\begin{picture}(30,17)(0,0)
\put(3,4){\vector(1,0){22}}\put(10,7){$Q$}
\end{picture}$\mathbf{\Delta}^{op}_0$\begin{picture}(30,17)(0,0)
\put(3,4){\vector(1,0){22}}\put(5,7){$SK$}
\end{picture}$Set)=S^{n+1}_0$, the $n+1$-power of $S_0=S([0])$, $0\leq n\leq 3$; hence, $Ran_K(SK)$ is an indiscrete category (a connected equivalence relation). $\check{\mathbb{P}}$ can be identified with $Cat$, the category of all small categories and $\check{\mathbb{M}}$ with the category $Preord$ of all preordered sets.

The reflection $Cat\rightarrow Preord$ was showed to be very-well-behaved in \cite{X:ml}, where the monotone-light factorization was presented and showed to be non-trivial ($\mathcal{E}'_{\check{I}}\neq\mathcal{E}_{\check{I}}$).

The existence of a monotone-light factorization is a consequence of Theorem \ref{theorem:stable units+enough edm}: for every category $M\in\check{\mathbb{P}}$, there is an effective descent morphism $p:E\rightarrow M$ in $\hat{\mathbb{P}}$ which is also an effective descent morphism in $\check{\mathbb{P}}=Cat$ (cf.\ the characterization of effective descent morphisms in $Cat$ given in \cite{JST:edm}); $p$ is the canonical presentation of the presheaf $M$, where $E$ is the coproduct of representable functors which are linear orders, and so $E$ is obviously also a linear order (a special case of a preorder).\end{examples}

\subsection{Discussion of the very-well-behaved reflection from 2-categories into 2-preorders}\label{subsec:2-categories into 2-preorders}

This and the following last subsection are presented in a lighter way than the rest of the paper. We expect to be more precise in a future paper.\\

The example discussed in this subsection was fully studied in \cite{X:2ml}, with a slightly different presentation from the one given here. In order to relate to that paper \cite{X:2ml}, so that the interested reader may have an easier task consulting it, we shall call \emph{precategories} the objects of $\hat{\mathbb{P}}=Set^\mathbb{P}$ in Examples \ref{example:m-l fact catsviapreord} just above, and the diagram there which generates the category $\mathbb{P}$ is now called a \emph{precategory diagram}, with $P_i=[i]$ ($0\leq i\leq 3$).

Consider the category $2\mathbb{P}$ generated by the following \emph{2-precategory} diagram,


\begin{picture}(400,160)

\put(0,140){$P_{43}$}
\put(35,155){\vector(1,0){70}}\put(105,140){\vector(-1,0){70}}

\put(128,140){$P_{42}$}
\put(165,155){\vector(1,0){70}}\put(235,140){\vector(-1,0){70}}

\put(250,140){$P_{41}$}
\put(285,155){\vector(1,0){70}}\put(355,140){\vector(-1,0){70}}
\put(370,140){$P_0$}

\put (0,130){\vector(0,-1){40}}\put (15,90){\vector(0,1){40}}

\put (130,130){\vector(0,-1){40}}\put (145,90){\vector(0,1){40}}

\put (250,130){\vector(0,-1){40}}\put (265,90){\vector(0,1){40}}

\put (358,105){$1_{P_0}$}\put (375,130){\vector(0,-1){40}}

\put(0,70){$P_{33}$}
\put(35,85){\vector(1,0){70}}\put(105,70){\vector(-1,0){70}}

\put(128,70){$P_{32}$}
\put(165,85){\vector(1,0){70}}\put(235,70){\vector(-1,0){70}}

\put(250,70){$P_{31}$}
\put(285,85){\vector(1,0){70}}\put(355,70){\vector(-1,0){70}}
\put(370,70){$P_0$}

\put (0,60){\vector(0,-1){40}}\put (15,20){\vector(0,1){40}}

\put (130,60){\vector(0,-1){40}}\put (145,20){\vector(0,1){40}}

\put (250,60){\vector(0,-1){40}}\put (265,20){\vector(0,1){40}}

\put (358,35){$1_{P_0}$}\put (375,60){\vector(0,-1){40}}

\put(0,0){$P_{23}$}
\put(35,15){\vector(1,0){70}}\put(105,0){\vector(-1,0){70}}

\put(128,0){$P_{22}$}
\put(165,15){\vector(1,0){70}}\put(235,0){\vector(-1,0){70}}

\put(250,0){$P_{21}$}
\put(285,15){\vector(1,0){70}}\put(355,0){\vector(-1,0){70}}
\put(370,0){$P_0$}

\put (0,-10){\vector(0,-1){40}}\put (15,-50){\vector(0,1){40}}

\put (130,-10){\vector(0,-1){40}}\put (145,-50){\vector(0,1){40}}

\put (250,-10){\vector(0,-1){40}}\put (265,-50){\vector(0,1){40}}

\put (358,-35){$1_{P_0}$}\put (375,-10){\vector(0,-1){40}}

\put(0,-70){$P_3$}
\put(35,-55){\vector(1,0){70}}\put(105,-70){\vector(-1,0){70}}
\put(60,-67){$s^2_j$}\put(60,-50){$d^2_i$}
\put(128,-70){$P_2$}
\put(165,-55){\vector(1,0){70}}\put(235,-70){\vector(-1,0){70}}
\put(190,-67){$s^1_j$}\put(190,-50){$d^1_i$}
\put(250,-70){$P_1$}
\put(285,-55){\vector(1,0){70}}\put(355,-70){\vector(-1,0){70}}
\put(190,-67){$s^1_j$}\put(190,-50){$d^1_i$}
\put(370,-70){$P_0$\hspace{10pt,}}
\put(320,-67){$s^0_0$}\put(320,-50){$d^0_i$}

\end{picture}

\vspace{80pt}
\noindent in which:

$\bullet$ each one of the four horizontal diagrams is a precategory diagram;

$\bullet$ each one of the four vertical diagrams is a precategory diagram (the rightmost one is the trivial one with all morphisms identities);

$\bullet$ in the obvious sense given in \cite{X:2ml}, the $s_j$ provide $3$ precategory morphisms vertically and $3$ precategory morphisms horizontally; the same for the $d_i$.\\

Let $K:\mathbb{P}\rightarrow 2\mathbb{P}$ be the inclusion functor of $\mathbb{P}$ in $2\mathbb{P}$

(recall, from Examples \ref{example:m-l fact catsviapreord}, that $\mathbb{P}$ is generated by

\begin{picture}(200,27)(0,0)
\put(2,0){$P_3$}
\put(20,10){\vector(1,0){40}}\put(60,-5){\vector(-1,0){40}}
\put(33,-2){$s^2_j$}\put(33,15){$d^2_i$}
\put(63,0){$P_2$}
\put(80,10){\vector(1,0){40}}\put(120,-5){\vector(-1,0){40}}
\put(95,-2){$s^1_j$}\put(95,15){$d^1_i$}
\put(125,0){$P_1$}
\put(140,10){\vector(1,0){40}}\put(180,-5){\vector(-1,0){40}}
\put(155,-2){$s^0_0$}\put(155,15){$d^0_i$}
\put(185,0){$P_0$}
\end{picture} and the usual relations).\\

$\{P_3=P_{13},P_2=P_{12},P_1=P_{11},P_0=P_{10}\}$ is a cogenerating set for $2\mathbb{P}$, because $P_0$ is a generating object for $2\mathbb{P}^{op}$. Then, by Theorem \ref{theorem:RightKanCogeneratingVeryWellBehaved}, there is a very-well-behaved reflection $$H\vdash I:\hat{2\mathbb{P}}=Set^{\hat{2\mathbb{P}}}\rightarrow 2\mathbb{M}=Monos(Set^K).$$

This reflection induces a subreflection having stable units $$\check{H}\vdash \check{I}:\check{2\mathbb{P}}\rightarrow \check{\mathbb{M}},$$ for the sketch whose models are those functors $M$ such that $Md_1\circ Md_0=Md_o\circ Md_2$ and $Md_2\circ Md_0=Md_o\circ Md_3$ are pullback squares for every horizontal and vertical precategory diagram. Indeed, $Ran_K\circ Set^K(M)\in \check{2\mathbb{P}}$ for every $M\in \check{2\mathbb{P}}$. Being $K$ a full inclusion, it is well known that $Ran_K\vdash Set^K$ is a full reflection (cf.\ Corollary 3 in \cite[\S X.3]{SM:cat}), hence $Ran_K$ can be seen as the inclusion of the category of ``indiscrete 2-graphs" in $\hat{2\mathbb{P}}$. An ``indiscrete 2-graph" $T:2\mathbb{P}\rightarrow Set$ is the presheaf such that $T_{21}=T_1\times_{T_0\times T_0}T_1$ is the kernel pair of $<Td_0,Td_1>:T_1\rightarrow T_0\times T_0$ in $Set$ (calling $T_{ij}$ the set $T(P_{ij})$ with $1\leq i\leq 4$ and $0\leq j\leq 3$); the others sets in $T$ are the obvious ones, such that the image by $T$ of the vertical precategory diagrams are equivalence relations on the elements in $P_i$ (the rightmost one is trivial, that is, a discrete category with set of objects $P_0$). It is clear then that $Ran_K\circ Set^K(S)\in \check{2\mathbb{P}}$ for every $S\in \check{2\mathbb{P}}$, so that the conditions of Proposition \ref{proposition:stableunitsModelsPresheaves} hold, and hence there is a subreflection having stable units $$\check{H}\vdash \check{I}: \check{2\mathbb{P}}\rightarrow \check{2\mathbb{M}}.$$

$\check{2\mathbb{P}}$ can be identified with $2Cat$, the category of all 2-categories, and $\check{2\mathbb{M}}$ with the category $2Preord$ of all 2-preorders (cf.\ the beginning of $\S 5$ in \cite{X:2ml}).

The reflection $2Cat\rightarrow 2Preord$ was showed to be very-well-behaved in \cite{X:2ml}, where the monotone-light factorization was displayed and showed to be non-trivial ($\mathcal{E}'_{\check{I}}\neq\mathcal{E}_{\check{I}}$). The existence of a monotone-light factorization is a consequence of Theorem \ref{theorem:stable units+enough edm}: for every 2-category $M\in\check{2\mathbb{P}}$, there is an effective descent morphism $p:E\rightarrow M$ in $\hat{2\mathbb{P}}$ which is also an effective descent morphism in $\check{2\mathbb{P}}=2Cat$.

In \cite{X:2ml}, the proof that $\check{H}\vdash\check{I}:2Cat\rightarrow 2Preord$ has stable units is much more complicated than in here. In that paper, one used to the full extent the results in \cite{X:GenConComp} to prove directly that the reflection $\check{H}\vdash\check{I}$ has stable units, while in the present paper this reflection inherits this property (although, to do so, a small theory had to be developed).


\subsection{Discussion of the very-well-behaved reflection from n-categories into n-preorders}\label{subsec:n-categories into n-preorders}

This subsection generalizes the example of the previous subsection, whose results are subsumed here. Anyway, the somewhat repeating of subjects helps the exposition and discussion of these examples, which can be displayed using different theoretical paths.\\

Consider the category $n\mathbb{P}$ generated by all the 2-precategory diagrams below (cf.\ \cite[\S 10]{X:nml} and last subsection \ref{subsec:2-categories into 2-preorders}), for all integers $i$, $j$ and $k$ such that $0\leq i<j<k\leq n$,


\begin{picture}(400,160)

\put(0,140){$P_{kkk}$}
\put(35,155){\vector(1,0){70}}\put(105,140){\vector(-1,0){70}}

\put(128,140){$P_{kki}$}
\put(165,155){\vector(1,0){70}}\put(235,140){\vector(-1,0){70}}

\put(250,140){$P_{kjk}$}
\put(285,155){\vector(1,0){70}}\put(355,140){\vector(-1,0){70}}
\put(370,140){$P_i$}

\put (0,130){\vector(0,-1){40}}\put (15,90){\vector(0,1){40}}

\put (130,130){\vector(0,-1){40}}\put (145,90){\vector(0,1){40}}

\put (250,130){\vector(0,-1){40}}\put (265,90){\vector(0,1){40}}

\put (358,105){$1_{P_i}$}\put (375,130){\vector(0,-1){40}}

\put(0,70){$P_{kkj}$}
\put(35,85){\vector(1,0){70}}\put(105,70){\vector(-1,0){70}}

\put(128,70){$P_{kk}$}
\put(165,85){\vector(1,0){70}}\put(235,70){\vector(-1,0){70}}

\put(250,70){$P_{kj}$}
\put(285,85){\vector(1,0){70}}\put(355,70){\vector(-1,0){70}}
\put(370,70){$P_i$}

\put (0,60){\vector(0,-1){40}}\put (15,20){\vector(0,1){40}}

\put (130,60){\vector(0,-1){40}}\put (145,20){\vector(0,1){40}}

\put (250,60){\vector(0,-1){40}}\put (265,20){\vector(0,1){40}}

\put (358,35){$1_{P_i}$}\put (375,60){\vector(0,-1){40}}

\put(0,0){$P_{kik}$}
\put(35,15){\vector(1,0){70}}\put(105,0){\vector(-1,0){70}}

\put(128,0){$P_{ki}$}
\put(165,15){\vector(1,0){70}}\put(235,0){\vector(-1,0){70}}

\put(250,0){$P_{k}$}
\put(285,15){\vector(1,0){70}}\put(355,0){\vector(-1,0){70}}
\put(370,0){$P_i$}

\put (0,-10){\vector(0,-1){40}}\put (15,-50){\vector(0,1){40}}

\put (130,-10){\vector(0,-1){40}}\put (145,-50){\vector(0,1){40}}

\put (250,-10){\vector(0,-1){40}}\put (265,-50){\vector(0,1){40}}

\put (358,-35){$1_{P_i}$}\put (375,-10){\vector(0,-1){40}}

\put(0,-70){$P_{jij}$}
\put(35,-55){\vector(1,0){70}}\put(105,-70){\vector(-1,0){70}}

\put(128,-70){$P_{ji}$}
\put(165,-55){\vector(1,0){70}}\put(235,-70){\vector(-1,0){70}}

\put(250,-70){$P_j$}
\put(285,-55){\vector(1,0){70}}\put(355,-70){\vector(-1,0){70}}

\put(370,-70){$P_i$\hspace{10pt.}}

\end{picture}

\vspace{80pt}

The number $\frac{(n-1)n(n+1)}{6}$ of these 2-precategory diagrams can easily be obtained by using the counting principle. Remark that these $\frac{(n-1)n(n+1)}{6}$ diagrams are not independent since they share common objects and arrows (every arrow between two common objects), and that if $n=2$ there is only one 2-precategory diagram.\\

Let $K:(n-1)\mathbb{P}\rightarrow n\mathbb{P}$ be the inclusion functor of $(n-1)\mathbb{P}$ in $n\mathbb{P}$.

The objects of $(n-1)\mathbb{P}$ constitute a cogenerating set for $n\mathbb{P}$, because $P_0$ is a generating object for $(n\mathbb{P})^{op}$. Then, by Theorem \ref{theorem:RightKanCogeneratingVeryWellBehaved}, there is a very-well-behaved reflection $$H\vdash I:\hat{n\mathbb{P}}=Set^{\hat{n\mathbb{P}}}\rightarrow \mathbb{M}=Monos(Set^K).$$

This reflection induces a subreflection having stable units $$\check{H}\vdash \check{I}:\check{n\mathbb{P}}\rightarrow \check{\mathbb{M}},$$ for the sketch whose models are those functors $M$ such that $Md_1\circ Md_0=Md_o\circ Md_2$ and $Md_2\circ Md_0=Md_o\circ Md_3$ are pullback squares for every horizontal and vertical precategory diagram. Indeed, $Ran_K\circ Set^K(M)\in \check{n\mathbb{P}}$ for every $M\in \check{n\mathbb{P}}$. Being $K$ a full inclusion, it is well known that $Ran_K\vdash Set^K$ is a full reflection (cf.\ Corollary 3 in \cite[\S X.3]{SM:cat}), hence $Ran_K$ can be seen as the inclusion of the category of ``indiscrete n-graphs" in $\hat{n\mathbb{P}}$. An ``indiscrete n-graph" $T:n\mathbb{P}\rightarrow Set$ is the presheaf such that $T_n=T_{n-1}\times_{T_{n-2}\times T_{n-2}}T_{n-1}$ is the kernel pair of $<Td_0,Td_1>:T_{n-1}\rightarrow T_{n-2}\times T_{n-2}$ in $Set$. It is clear then that $Ran_K\circ Set^K(S)\in \check{n\mathbb{P}}$ for every $S\in \check{n\mathbb{P}}$, so that the conditions of Proposition \ref{proposition:stableunitsModelsPresheaves} hold, and there is a subreflection having stable units $$\check{H}\vdash \check{I}: \check{n\mathbb{P}}\rightarrow \check{n\mathbb{M}}.$$

$\check{n\mathbb{P}}$ can be identified with $nCat$ (cf.\ \cite[\S 10]{X:nml}), the category of all n-categories, and $\check{n\mathbb{M}}$ with the category $nPreord$ of all n-preorders (cf.\ the beginning of $\S 12$ in \cite{X:nml}).

The reflection $nCat\rightarrow nPreord$ was showed to be very-well-behaved in \cite{X:nml}, where the monotone-light factorization was displayed and showed to be non-trivial ($\mathcal{E}'_{\check{I}}\neq\mathcal{E}_{\check{I}}$), for every $n\geq 2$. The existence of a monotone-light factorization is a consequence of Theorem \ref{theorem:stable units+enough edm}: for every n-category $M\in\check{2\mathbb{P}}$, there is an effective descent morphism $p:E\rightarrow M$ in $\hat{n\mathbb{P}}$ which is also an effective descent morphism in $\check{n\mathbb{P}}=nCat$.

In \cite{X:nml}, the proof that $\check{H}\vdash\check{I}:nCat\rightarrow nPreord$ has stable units was made using a quite different path from the one used here. In that paper, the category of n-categories was considered in the context of $\mathcal{V}$-categories; it is known that, considering $\mathcal{V}=Set$ and then iterating n-categories are obtained (see \cite{Kelly:enriched_cat}). There, beginning with the well-behaved reflection $Cat\rightarrow Preord$ and using the \emph{change of enriching category} (see \cite{CruttwellPhD}), it was possible to show that the derived reflection from n-categories into n-preorders had stable units, while in the present paper this reflection inherits this property from larger reflections in which it is contained.

It is also relevant to state that the path in the context of $\mathcal{V}-categories$, in order to establish the stable units property, could be replaced by the process given in \cite{X:GenConComp} which was used for 2-categories in \cite{X:2ml}.


\section*{Acknowledgement}
This work is supported by CIDMA under the FCT (Portuguese Foundation for Science and Technology) Multi-Annual Financing Program for R\&D Units.

\end{document}